\newtheorem{theorem}{Theorem}[section]
\newtheorem{definition}[theorem]{Definition}
\newtheorem{lemma}[theorem]{Lemma}
\newtheorem{proposition}[theorem]{Proposition}
\newenvironment{proof}[1][Proof]{\noindent\textbf{#1.} }{\ \rule{0.5em}{0.5em}}
\begin{document}

\title{Clifford-Wolf translations of Homogeneous Randers spheres\footnote{Supported by NSFC (no, 10671096, 10971104) and SRFDP of China}}
\author{  Shaoqiang Deng$^{1}$ and Ming Xu$^2$
\thanks{Corresponding author. E-mail: mgxu@math.tsinghua.edu.cn}\\
$^1$School of Mathematical Sciences and LPMC\\
Nankai University\\
Tianjin 300071, P. R. China\\
$^2$Department of Mathematical Sciences\\
Tsinghua University\\
Beijing 100084, P. R. China}
%\address{}
\date{}
\maketitle
\begin{abstract}
In this paper, we study Clifford-Wolf translations of homogeneous Randers metrics on spheres. It turns out that we can present a complete description
of all the Clifford-Wolf translations of all the homogeneous Randers metrics on spheres.
The most important point of this paper is that a new phenomena surfaces. Namely, we find that there are some CW-homogeneous
Randers spaces which are essentially not symmetric. This is a great difference compared to Riemannian geometry, where any CW-homogeneous
Riemannian manifold must be locally symmetric.

\textbf{Mathematics Subject Classification (2000)}: 22E46, 53C30.

\textbf{Key words}: Finsler spaces, Clifford-Wolf translations, Killing vector fields, homogeneous Randers manifolds.
\end{abstract}

\section{Introduction}
In this paper we continue our study concerning Clifford-Wolf translations of Finsler spaces in our previous article (\cite{DXP}). Our main goal here is to give a complete description of Clifford-Wolf translations of homogeneous Randers metrics on spheres. Recall that a Clifford-Wolf of a locally compact connected metric space is an isometry of the space which moves all the point in the same distance. Although generically the distance function of a Finsler space is not reversible, one can similarly define the Clifford-Wolf translation of a Finsler space (see Definition 2.2 below).

A connected Riemannian manifold $(M, Q)$ is called Clifford-Wolf homogeneous (CW-homogeneous) if for any $x,y\in M$, there is a Clifford-Wolf translation $\sigma$ such that $\sigma (x)=y$. It is called restrictively CW-homogeneous if for ant $x\in M$, there is a open neighborhood $V$ of $x$ such that for any $x'\in V$ there is an isometry $\sigma'$ of $(M,Q)$ such that $\sigma'(x)=x'$. CW-homogeneous Riemannian manifolds were thoroughly studied by V. N. Berestovskii and Yu. G. Nikonorov in \cite{BN09}. It was proved in \cite{BN09} that any restrictively CW-homogeneous Riemannian manifold must be locally symmetric. Based on this, the authors of \cite{BN09} obtained a complete classification of all connected simply connected CW-homogeneous Riemannian manifolds. The complete list consists of   compact Lie groups with bi-invariant Riemannian metrics, the odd-dimensional spheres with standard metrics and the symmetric space $SU(2n+2)/Sp(n)$ with the standard symmetric metrics.

The notion of CW-homogeneous and restrictively CW-homogeneous Riemannian manifold can be generalized to the Finsler case (see Definition 2.5 below). It is therefore a natural problem to find out whether the above conclusions still hold for Finsler spaces and to give a complete classification of all the CW-homogeneous Finsler spaces. This  problem is much more difficult compared to the Riemannian case. To begin with we first consider homogeneous Randers metrics on spheres. The main results of this paper is that there are some homogeneous Randers metrics on the spheres which are restrictively  CW-homogeneous but which are essentially not locally symmetric, in the following senses: The underlying Riemannian metrics of such Randers metrics are not locally symmetric and; Such Randers metrics are not of Berwald type. Note that a locally symmetric Finsler space must be of Berwald type, this fact was conjectured by the first author and Z. Hou in \cite{DH07} and was recently proved by  V. S. Matveev and M. Troyanov in \cite{MTP}. Meanwhile, we will give a complete list of all the Clifford-Wolf translations of  the homogeneous Randers metrics on spheres.

\section{Preliminary}
Finsler geometry is introduced by Riemann in 1854 in his celebrated  lecture on the foundations of geometry,  and revived in 1918 by Finsler
in his doctoral dissertation.
\begin{definition}
A Finsler metric on a manifold $M$ is a continuous function
$F:TM\rightarrow \mathbb{R}$, which is smooth on $TM\backslash 0$,
and satisfies the following conditions:

(1) (Positivity) $F(x,y)>0$ if $y\neq 0$.

(2) (Positive homogeneousness) $F(x,\lambda y)=\lambda F(x,y)$ for $\lambda>0$.

(3) (Convexity) The Hessian matrices of $F^2$ for $y$,
i.e. $g_{ij}=\frac{1}{2}[F^2]_{y^i y^j}$, are positively definite on
$TM\backslash 0$.
\end{definition}

The most familiar examples of Finsler metric is the Riemannian
metrics, when $F=\sqrt{g_{ij}(x) y^i y^j}$ is a quadratic function of $y$
for any $x$ on the manifold. Similarly as in the Riemannian case, any Finsler metric $F$
gives the length
 for any tangent vector, and this gives arc
length for any piecewise smooth path. We can then define ``distance" as the
minimum of  the arc lengthes among all the piece-wise smooth curves from one point to another \cite{BCS00}.
The distance of a Finsler metric does not satisfy the reversibility
of a metric space,
unless $F$ is absolutely homogeneous,
i.e. $F(x,y)=F(x,-y)$, $\forall x\in M,y\in TM_x$. For simplicity, we will still call it
the distance and denote it as $d(\cdot,\cdot)$.

Among the non-Riemannian examples of Finsler metrics,  Randers
metrics are well-known for its simplicity and importance in geometry and physics.
A Randers
metric $F$ is a sum $F=\alpha+\beta$, where
$\alpha$ is a Riemannian metric and $\beta$ is an one-form whose length
 is everywhere less than $1$.

%It is not hard to be noticed that the decomposition for a Randers
%metric is unique,  the metric $\alpha$ and the one-form $\beta$ are
%global defined. A Randers metric is a distortion of a Riemannian
%metric in the most simply way. The study of Randers metrics are
%deeply related to those of Riemannian geometry and classical
%geometry.

In \cite{DXP} we have studied the Clifford Wolf translations in Finsler
geometry. We now recall the definitions.
\begin{definition}
A Clifford Wolf translation (or simply a CW-translation) $\rho$ of a Finsler manifold $(M,F)$ is an isometry of $(M, F)$
such that $d(x,\rho(x))$ is a
constant function.
\end{definition}

The interrelation between CW translations and Killing vector fields of
constant lengths in the Riemannian case, due to V. N. Berestovskii and Yu. G. Nikonorov  \cite{BN09}, was generalized to the Finslerian case in
our previous paper \cite{DXP}. We have

\begin{theorem}\label{local-rela}
Let $(M,F)$ be a complete Finsler manifold with positive  injective radius. If $X$ is a Killing vector field of constant
length, then the flow $\phi_t$ generated by $X$ is a CW-translation for
all sufficiently small $t>0$.
\end{theorem}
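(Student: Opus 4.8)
The plan is to upgrade the constant-length hypothesis into the statement that the integral curves of $X$ are geodesics of $F$, and then to read off the conclusion from the positive injectivity radius. The Riemannian prototype is the identity $\nabla_X X=0$ for constant-length Killing fields, used in \cite{BN09}; the point is to find a Finslerian substitute that avoids connection computations.

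I would use two standard facts (see e.g. \cite{BCS00}). First, the fundamental inequality: for $w\ne 0$ and arbitrary $v$ one has $g_w(v,w)\le F(v)F(w)$, with equality if and only if $v=\lambda w$ for some $\lambda\ge 0$; in particular $g_v(v,v)=F(v)^2$. Second, the Finsler form of Noether's theorem: if $X$ is a Killing field then $v\mapsto g_v(v,X_{\pi(v)})$ is constant along every (constant-speed) geodesic. I also note that since $M$ is complete, $\phi_t$ is a globally defined one-parameter group of isometries of $(M,F)$; and if $c:=F(X)\equiv 0$ then $X\equiv 0$, $\phi_t=\mathrm{id}$, and there is nothing to prove, so assume $c>0$.

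The core step is then this. Fix $x\in M$ and let $\sigma$ be the geodesic with $\dot\sigma(0)=X_x$. Along $\sigma$ we have $F(\dot\sigma(s))\equiv F(X_x)=c$ (constant speed), $F(X_{\sigma(s)})\equiv c$ (hypothesis), and, by the conservation law together with $g_{X_x}(X_x,X_x)=F(X_x)^2$,
\[
  g_{\dot\sigma(s)}\bigl(\dot\sigma(s),\,X_{\sigma(s)}\bigr)=g_{\dot\sigma(0)}\bigl(\dot\sigma(0),\,X_x\bigr)=c^2 .
\]
Applying the equality case of the fundamental inequality with $w=\dot\sigma(s)$ and $v=X_{\sigma(s)}$ forces $X_{\sigma(s)}=\lambda(s)\dot\sigma(s)$ with $\lambda(s)\ge 0$, and comparing $F$-lengths gives $\lambda(s)\equiv 1$. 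Hence $\dot\sigma(s)=X_{\sigma(s)}$ for all $s$, so $\sigma$ is the integral curve of $X$ through $x$; that is, $\phi_s(x)=\sigma(s)$ and $\phi_t(x)=\exp_x(tX_x)$.

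To conclude, let $\delta>0$ be a lower bound for the injectivity radius of $(M,F)$. For $0<t<\delta/c$ the segment $s\mapsto\phi_s(x)$, $s\in[0,t]$, is a geodesic of length $\int_0^t F(X_{\phi_s(x)})\,ds=ct<\delta$ from $x$ to $\phi_t(x)$, hence minimizing, so $d(x,\phi_t(x))=ct$ for every $x\in M$. As this is independent of $x$, $\phi_t$ is a Clifford--Wolf translation. The step I expect to be the main obstacle is the core step: the delicate points are to run the conservation law with exactly the quantity $g_v(v,X)$ (so its value on $\dot\sigma(0)=X_x$ is precisely $c^2$) and to apply the fundamental inequality with the base vector taken at $\dot\sigma(s)$, not at $X_{\sigma(s)}$. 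A minor point is to read ``positive injectivity radius'' as a uniform lower bound $\delta>0$, which is what makes ``sufficiently small $t$'' uniform over $M$ --- automatic in the compact setting of homogeneous Randers spheres.
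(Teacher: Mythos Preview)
The paper does not actually prove this theorem; it is quoted verbatim from the authors' earlier preprint \cite{DXP}, so there is no in-paper proof to compare against. Your argument is therefore being assessed on its own merits.

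Your proof is correct and is precisely the expected Finslerian analogue of the Berestovskii--Nikonorov argument. The two ingredients you invoke are standard: the conservation law $s\mapsto g_{\dot\sigma(s)}(\dot\sigma(s),X_{\sigma(s)})$ for a Killing field along a geodesic (Noether for the Lagrangian $\tfrac12 F^2$), and the fundamental inequality $g_w(v,w)\le F(v)F(w)$ with equality iff $v$ is a nonnegative multiple of $w$. Since $g_w$ is symmetric, the conserved quantity starting at $c^2$ saturates the inequality with $w=\dot\sigma(s)$, $v=X_{\sigma(s)}$, yielding $X_{\sigma(s)}=\dot\sigma(s)$ and hence $\phi_t(x)=\exp_x(tX_x)$. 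The uniform positive injectivity radius then makes $d(x,\phi_t(x))=ct$ for $0<t<\delta/c$, which is the CW condition. Your caveats about which slot carries the base point and about reading ``positive injectivity radius'' as a uniform lower bound are exactly the right ones; no further idea is needed.
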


\begin{theorem}
Let $(M, F)$ be a  compact Finsler manifold. Then there is a $\delta>0$,
such that any CW-translation $\rho$ with $d(x,\rho(x))<\delta$ is
generated by a Killing vector field of constant length.
\end{theorem}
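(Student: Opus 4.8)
The statement is the converse of Theorem~\ref{local-rela}, and the plan is to recover the Killing field as the logarithm of $\rho$ inside the isometry group. Recall that the isometry group $I(M,F)$ of a compact Finsler manifold is a compact Lie group $G$, whose Lie algebra $\mathfrak{g}$ is the algebra of Killing vector fields of $(M,F)$. I would first choose $\delta>0$ so small that two things hold at once: (a) any two points within forward distance $2\delta$ are joined by a unique minimizing geodesic, which varies smoothly with its endpoints (valid since a compact Finsler manifold has positive injectivity radius); and (b) every isometry $\rho$ with $d(x,\rho(x))<\delta$ for all $x$ lies in a fixed neighbourhood $\mathcal{U}$ of $e$ on which $\exp_{G}$ restricts to a diffeomorphism from a ball in $\mathfrak{g}$. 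Condition (b) follows by compactness: if isometries $\rho_{n}$ had $\sup_{x}d(x,\rho_{n}(x))\to0$ but $\rho_{n}\notin\mathcal{U}$, then, since on a compact manifold the forward and backward distances are bi-Lipschitz near the diagonal, $\rho_{n}\to e$ uniformly, hence in $G$ --- a contradiction. For such a $\rho$ write $\rho=\exp_{G}(X)$ with $X\in\mathfrak{g}$, let $\phi_{t}=\exp_{G}(tX)$ be the flow of the Killing field $X$ (so $\phi_{1}=\rho$), and set $\varepsilon:=d(x,\rho(x))$, a constant by hypothesis. It suffices to prove that $L(x):=F(x,X_{x})$ is the constant $\varepsilon$.

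One inequality is immediate. As $X$ is Killing, $(\phi_{s})_{*}X=X$, so $F(\phi_{s}(x),X_{\phi_{s}(x)})=F(x,X_{x})=L(x)$; that is, $L$ is constant along each flow line of $X$. Hence the curve $s\mapsto\phi_{s}(x)$, $s\in[0,1]$, joins $x$ to $\rho(x)$ with $F$-length $\int_{0}^{1}F(\phi_{s}(x),X_{\phi_{s}(x)})\,ds=L(x)$, and therefore $\varepsilon=d(x,\rho(x))\le L(x)$ for all $x$.

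For the reverse inequality I would pass to minimizing geodesics. Let $\sigma_{x}\colon[0,\varepsilon]\to M$ be the unique unit-speed minimizing geodesic from $x$ to $\rho(x)$; since $\rho$ is an isometry, $\rho\circ\sigma_{x}$ is the minimizing geodesic from $\rho(x)$ to $\rho^{2}(x)$. Applying the triangle inequality at the midpoint $p=\sigma_{x}(\varepsilon/2)$ together with the Clifford--Wolf identity $d(p,\rho(p))=\varepsilon$ forces equality, so $\rho(x)$ lies on a minimizing geodesic from $p$ to $\rho(p)$; uniqueness of short minimizing geodesics then forces $\sigma_{x}$ and $\rho\circ\sigma_{x}$ to agree in velocity at $\rho(x)$, i.e.\ to combine into a single smooth geodesic. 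Iterating forwards by $\rho$ and backwards by $\rho^{-1}$ gives, through each $x$, a smooth $\rho$-equivariant geodesic $\widetilde{\sigma}_{x}\colon\mathbb{R}\to M$ (with $\rho(\widetilde{\sigma}_{x}(s))=\widetilde{\sigma}_{x}(s+\varepsilon)$) that minimizes on each parameter interval of length $\varepsilon$; since $\rho$ commutes with the torus $T=\overline{\{\phi_{t}\}}$, uniqueness further yields $t\cdot\widetilde{\sigma}_{x}=\widetilde{\sigma}_{t\cdot x}$ for $t\in T$, so the initial velocities $V_{x}:=\dot{\widetilde{\sigma}}_{x}(0)$ form a smooth, $T$-invariant, unit-length vector field whose flow $\Psi_{t}$ satisfies $\Psi_{\varepsilon}=\rho$. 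If one can show that $V$ is itself a Killing field, then $X':=\varepsilon V$ is a Killing field of constant length $\varepsilon$ with $\exp_{G}(X')=\Psi_{\varepsilon}=\rho$, so $\rho$ is generated by $X'$; and then the length of $X$ too equals $\varepsilon$ everywhere, since $\widetilde{\sigma}_{x}|_{[0,\varepsilon]}$ is a minimizing path from $x$ to $\rho(x)$ of length equal to both $\varepsilon$ and $L(x)$.

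The hard part is exactly this last point: showing that $V$ is Killing, equivalently that the partial translations $\Psi_{t}$ are isometries, equivalently that $d(x,\phi_{t}(x))=t\varepsilon$ for all $x$ and all small $t>0$ (whence $L(x)=\lim_{t\to0^{+}}d(x,\phi_{t}(x))/t=\varepsilon$ at once). In the Riemannian case this is proved by symmetric triangle-inequality arguments showing the ``roots'' $\phi_{1/n}=\exp_{G}(X/n)$ are again Clifford--Wolf translations, of displacement $\varepsilon/n$; in the Finsler case the non-reversibility of $d$ obstructs those arguments, and I expect the real work to lie in controlling the reverse distances $d(\rho x,x)$ --- for example by first upgrading the midpoint construction to $d(x,\rho^{m}(x))=m\varepsilon$ whenever $m\varepsilon$ stays below a fixed radius (so that each $\rho^{m}$ is Clifford--Wolf), and then running the same construction with $\phi_{1/n}$ in place of $\rho$ to see that $\phi_{1/n}$ is Clifford--Wolf of displacement $\varepsilon/n$. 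With that in hand, constancy of the length of $X$ is immediate and the theorem follows.
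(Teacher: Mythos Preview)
The paper does not prove this theorem in the text; it is quoted from the authors' earlier preprint \cite{DXP}, so there is no in-paper argument against which to compare your proposal directly.

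As for the proposal itself, the outline follows the natural Berestovskii--Nikonorov template, and the easy direction $\varepsilon\le L(x)$ together with the concatenation of minimizing geodesics into a $\rho$-equivariant geodesic $\widetilde\sigma_x$ is set up correctly. The genuine gap is exactly where you place it, and your suggested fix does not close it. To ``run the same construction with $\phi_{1/n}$ in place of $\rho$'' you would need to know already that $d(x,\phi_{1/n}(x))$ is constant in $x$, which is the very conclusion sought; knowing only that $(\phi_{1/n})^n=\rho$ is a CW-translation does not yield this, and the Riemannian passage from powers $\rho^m$ to roots $\phi_{1/n}$ leans on the symmetry of $d$ in a way you have acknowledged but not replaced. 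So as written the argument is a plan with its key step still open, not a proof. There is also a small slip in your last sentence: $\widetilde\sigma_x|_{[0,\varepsilon]}$ has length $\varepsilon$, not $L(x)$; the curve of length $L(x)$ is the $X$-flow line $t\mapsto\phi_t(x)$, which is a different curve. The correct reason $L(x)=\varepsilon$, once $V$ is known to be Killing, is simply injectivity of $\exp_G$ on the chosen ball, which forces $X=\varepsilon V$.
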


There are some other concepts related to  CW-translations which
has been studied extensively in the Riemannian case. For example, Clifford-Wolf
homogeneous space and restrictively Clifford-Wolf homogeneous space.

\begin{definition}
A Finsler manifold $(M,F)$ is called Clifford-Wolf homogeneous  if
for any two points $x_1,x_2\in M$, there is a CW-translation $\sigma$ such that $\sigma (x_1)=x_2$.
It is called restrictively Clifford-Wolf homogeneous if  for
any point $x\in M$ there is a neighborhood $V$ of $x$, such that for any two points $x_1, x_2\in V$ there is
 a CW-translation $\sigma$ such that $\sigma (x_1)=x_2$.
\end{definition}

We will simply call such a spce CW-homogeneous or restrictively CW-homogeneous.
As we will only deal with compact manifolds in this work, the definition
of restrictively CW-homogeneous can be simplified as the following one.

\begin{definition}
A compact Finsler manifold $(M,F)$ is called restrictively CW-homogeneous if there is a constant $\delta>0$, such that for any pair of points $x$ and $x'$
with $d(x,x')<\delta$ (or equivalently, $d(x',x)<\delta$),
 there is a CW-translation $\rho$ such that $\rho(x)=x'$.
\end{definition}

Obviously the CW-homogeneity or the restrictive CW-homogeneity of a Finsler space $(M, F)$ implies the homogeneity
of the space. Therefore to understand CW-homogeneous Finsler space it is natural to start with  CW-translations of homogeneous Finler spaces. In this case, both the metric data and the conditions for CW-translations can be reduced to
the Lie algebra level, which greatly reduces the complexity of
the problem.

In \cite{DXP} we have studied examples of CW translations on some
compact Lie groups, with left invariant non-Riemannian  Randers metrics. In this work
we will see more examples of CW-translations of homogeneous Randers metrics on
spheres.

\section{Homogeneous  Randers metrics on spheres}

Let $(M,F)$ be a connected compact  Finsler space.
It is called a homogeneous space,
or $F$ is called a homogeneous metric, if its full connected isometry group $G_0=I_0(M,F)$
acts transitively on $M$. It has been proven that $G_0$ is a
compact Lie group \cite{DH02}. Let $H_0\subset G_0$ be the isotropic subgroup of a
 point of $M$. Then the manifold is naturally diffeomorphic to $G_0/H_0$.
In general there are more than one way to express $M$ as a homogeneous space. In fact,
 any connected closed subgroup
$G\subset I_0(M,F)$ which acts transitively on $M$, with the isotropy
subgroup $H\subset G$ fixing the same point,  gives a homogeneous space $G/H$ for $M$.
No matter which $G$ is used, the quotient vector space of the Lie algebras
$\mathfrak{m}=\mathfrak{g}/\mathfrak{h}$
is the same. It can be identified with the tangent space at the chosen point.

The Finsler metric $F$ is totally determined by its restriction to $\mathfrak{m}$,
which is an $Ad_H$-invariant Minkowski norm. This Minkowski norm is transposed to
 other points by the left  translations of $G$. On the other hand, if $K$ is an  effective transitive Lie transformation  group on $M$ and $K_1$ is the isotropy subgroup of $K$ at a fixed point $x\in M$. Then for any $Ad (K_1)$-invariant Minkowski norm on the quotient space $\mathfrak{k}/\mathfrak{k}_1$, one can construct a $K$-invariant Finsler metric on $M$ using the above method.
%With respect to this metric, any element of $K$ will act as an isometry, even if
%$K$ is not an subgroup of $G_0$. In this case, we will just use the image of $G$ in $G_0$
%instead of $G$ itself.

Let us give an explicit example.  Suppose  $F=\alpha+\beta$ is a homogeneous Randers metric on
$M=G/H$, with $\mathfrak{m}=\mathfrak{g}/\mathfrak{h}$. Then
$\alpha$ is determined by an $\mbox{Ad}_H$-invariant inner product
on $\mathfrak{m}$,
and $\beta$ is determined by an $\mbox{Ad}_H$
invariant element of $\mathfrak{m}^*$. Equivalently, $\beta$ can be
determined by its dual with respect to the inner product, which is an $\mbox{Ad}_H$ invariant
vector $V\in\mathfrak{m}$.

The following lemma  is useful for determining Killing vector fields of constant length, which can generate CW-translations for a homogeneous space, see \cite{DXP}.
\begin{lemma} Let $(M, F)$ be a homogeneous Finsler space and $G$ be its full group of isometries, Lie\,$G=\mathfrak{g}$.
Then a Killing vector field generated by $X\in \mathfrak{g}$
is of constant length $1$
if and only if the projection of the $Ad_{G}$-orbit
of $X$ to $\mathfrak{m}$ is contained in the indicatrix.
\end{lemma}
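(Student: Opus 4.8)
The plan is to reduce the computation of the length of the Killing field generated by $X$ at an arbitrary point of $M$ to a computation at a single base point, exploiting the fact that $G$ acts by isometries. Fix a base point $o\in M$ with isotropy group $H$, so that $M=G/H$ and $\mathfrak{m}=\mathfrak{g}/\mathfrak{h}$, and identify $\mathfrak{m}$ with $T_oM$ via $Y\mapsto\frac{d}{dt}\big|_{t=0}\exp(tY)\cdot o$; under this identification the restriction of $F$ to $\mathfrak{m}$ is the given $\mathrm{Ad}_H$-invariant Minkowski norm, whose indicatrix is $\{v\in\mathfrak{m}:F(v)=1\}$. Denote by $X^\ast$ the Killing vector field generated by $X$, so that $X^\ast_p=\frac{d}{dt}\big|_{t=0}\exp(tX)\cdot p$; since $G$ acts transitively, every point of $M$ is of the form $g\cdot o$ for some $g\in G$, and it suffices to control $F(X^\ast_{g\cdot o})$ as $g$ ranges over $G$.

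The key step is to transport the vector $X^\ast_{g\cdot o}$ back to $T_oM$ by the isometry $g^{-1}$. A direct computation gives
\[
 dg^{-1}\bigl(X^\ast_{g\cdot o}\bigr)=\frac{d}{dt}\Big|_{t=0}g^{-1}\exp(tX)g\cdot o=\frac{d}{dt}\Big|_{t=0}\exp\bigl(t\,\mathrm{Ad}(g^{-1})X\bigr)\cdot o,
\]
which, under the identification above, is precisely the image in $T_oM$ of $\mathrm{pr}_{\mathfrak{m}}\bigl(\mathrm{Ad}(g^{-1})X\bigr)$, where $\mathrm{pr}_{\mathfrak{m}}\colon\mathfrak{g}\to\mathfrak{m}$ is the quotient projection. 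Since $g^{-1}$ is an isometry of $(M,F)$, this yields $F\bigl(X^\ast_{g\cdot o}\bigr)=F\bigl(\mathrm{pr}_{\mathfrak{m}}(\mathrm{Ad}(g^{-1})X)\bigr)$ for every $g\in G$.

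It follows that $X^\ast$ has constant length $1$ if and only if $F\bigl(\mathrm{pr}_{\mathfrak{m}}(\mathrm{Ad}(g^{-1})X)\bigr)=1$ for all $g\in G$; because $g\mapsto g^{-1}$ is a bijection of $G$, this is the same as requiring $\mathrm{pr}_{\mathfrak{m}}(\mathrm{Ad}(g)X)$ to lie on the indicatrix for all $g\in G$, i.e.\ that the projection of the $\mathrm{Ad}_G$-orbit of $X$ to $\mathfrak{m}$ be contained in the indicatrix. This establishes both implications at once. I do not expect a substantive obstacle here: the argument is essentially an equivariance bookkeeping, and the only care needed is with the identifications and the left/right conventions for the $G$-action (in particular the appearance of $\mathrm{Ad}(g^{-1})$ rather than $\mathrm{Ad}(g)$), together with the routine observation that Killing fields and adjoint orbits are insensitive to the component group, so one may as well take $G$ connected.
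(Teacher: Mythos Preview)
Your argument is correct and is exactly the standard equivariance computation one expects here. The paper itself does not prove this lemma in the text; it merely cites the authors' earlier preprint \cite{DXP} for the proof, so there is no in-paper argument to compare against in detail. Your reduction via $dg^{-1}(X^\ast_{g\cdot o})=\mathrm{pr}_{\mathfrak{m}}(\mathrm{Ad}(g^{-1})X)$ and the isometry property of $g^{-1}$ is the natural proof and almost certainly coincides with what is in the cited reference.
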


By studying the projections of the orbits, we can find the wanted homogeneous
Finsler metric from its indicatrix at the chosen point.

Now we turn to the main subject of this paper, namely, spheres with homogeneous Randers metrics.
Suppose $G$ is an effective transitive Lie transformation group of $S^n$  and $H$ is the isotropy subgroup of $G$ at a fixed point. If $F=\alpha+\beta$ is
a $G$-invariant Randers metric on $S^n$,  then so is $\alpha$ (\cite{D08}).
A complete list of Lie groups which admit an effective transitive action on $S^n$ was obtained by Montgomery and Samelson (\cite{MS43}). The list results in the following:
\begin{lemma}\label{homospacelistforsphere} The following list of Riemannian homogeneous spaces
$G/H$ for spheres is complete, in any case $G$ is a connected subgroup of the full isometry group of
a $G$-invariant Riemannian metric $\alpha$ on $S^n$.

(1) $S^n=SO(n+1)/SO(n)$, $n\geq 1$,

(2) $S^{2n+1}=SU(n+1)/SU(n)$, $n\geq 1$,

(3) $S^{2n+1}=U(n+1)/U(n)$, $n\geq 1$,

(4) $S^{4n+3}=Sp(n+1)/Sp(n)$, $n\geq 1$,

(5) $S^{4n+3}=Sp(n+1)U(1)/Sp(n)U(1)$, $n\geq 1$,

(6) $S^{4n+3}=Sp(n+1)Sp(1)/Sp(n)Sp(1)$, $n\geq 1$,

(7) $S^6=G_2/SU(3)$,

(8) $S^7=Spin(7)/G_2$,

(9) $S^{16}=Spin(9)/Spin(7)$.
\end{lemma}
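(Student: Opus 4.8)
The plan is to reduce the statement to the classical classification of transitive actions of compact connected Lie groups on spheres due to Montgomery and Samelson \cite{MS43}. First, since $F=\alpha+\beta$ is $G$-invariant, so is $\alpha$ (\cite{D08}), hence $G$ is a closed connected subgroup of the compact Lie group $I_0(S^n,\alpha)$; replacing $G$ by the quotient through which it acts, we may assume $G$ is a compact connected Lie group acting transitively and almost effectively on $S^n$, so that $S^n=G/H$ with $H$ compact and, for $n\ge 2$ (since $S^n$ is simply connected), connected; the case $n=1$ is item (1) with $G=SO(2)$. Thus it suffices to enumerate all such pairs $(G,H)$.

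Second, I would exploit the fibration $H\hookrightarrow G\to S^n$ and its long exact homotopy sequence. In degrees $i\le n-2$ this gives $\pi_i(H)\cong\pi_i(G)$, and near the top it gives $\pi_n(S^n)=\mathbb{Z}\to\pi_{n-1}(H)\twoheadrightarrow\pi_{n-1}(G)\to 0$, which tightly constrains the homotopy types of $G$ and $H$ once one invokes the known low-degree homotopy of compact Lie groups. When $n$ is even there is the additional rigidity $\chi(S^n)=2\neq 0$, forcing $\operatorname{rank}G=\operatorname{rank}H$, so that $H$ is the centralizer of a torus in $G$ and a Borel--de Siebenthal type analysis applies; when $n$ is odd this Euler characteristic constraint is vacuous and one relies on the dimension count $\dim G-\dim H=n$ together with the requirement $H^*(G/H;\mathbb{Q})\cong H^*(S^n;\mathbb{Q})$.

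Third, I would run through the list of compact simple Lie groups (reducing the semisimple or reductive case to the simple one by showing that a single factor already acts transitively, or by a direct product analysis), and for each $G$ list the subgroups $H$ of the correct corank. The classical families yield the linear actions in (1)--(6), the distinction among $Sp(n+1)$, $Sp(n+1)U(1)$ and $Sp(n+1)Sp(1)$ coming from the quaternionic Hopf fibration $S^{4n+3}\to\mathbb{H}P^n$ and the normalizer of $Sp(n)$. The exceptional groups produce exactly the three sporadic items: $G_2$ acting on the unit sphere of $\operatorname{Im}\mathbb{O}=\mathbb{R}^7$ with stabilizer $SU(3)$, giving (7); $Spin(7)$ acting on the unit sphere of the $8$-dimensional real spinor module with stabilizer $G_2$, giving (8); and $Spin(9)$ acting on the unit sphere of its $16$-dimensional spinor module with stabilizer $Spin(7)$, giving (9). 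Finally one checks that each surviving candidate genuinely acts transitively on a true sphere rather than merely a homotopy sphere, which is automatic in these low dimensions.

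The main obstacle is precisely this exhaustive bookkeeping: ruling out spurious corank candidates across all compact simple groups, and in particular handling the exceptional cases, where one must know the explicit structure of the representations of $G_2$ on $\mathbb{R}^7$, of $Spin(7)$ on $\mathbb{R}^8$, and of $Spin(9)$ on $\mathbb{R}^{16}$, and verify that $F_4, E_6, E_7, E_8$ contribute nothing new. Since all of this is carried out in \cite{MS43}, for the present purposes we simply invoke that classification.
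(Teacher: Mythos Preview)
Your proposal is correct and coincides with the paper's treatment: the paper does not prove this lemma but simply invokes the Montgomery--Samelson classification \cite{MS43}, exactly as you do in your final sentence. Your intermediate sketch of the homotopy-theoretic and rank arguments is additional commentary beyond what the paper provides, but the underlying approach---cite the known classification---is the same.
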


The list gives all the possible $G\subset I_0(M,F)$ which acts transitively on  spheres, for all possible homogeneous Finsler metrics $F$ on them.
In the special case of Randers metrics, to produce a non-Riemannian metric on $G/H$, we must have a non-zero vector in $\mathfrak{m}$ which is fixed under $Ad (h)$, for any $h$ in the isotropy subgroup. This is equivalent to the condition that the isotropy representation of $H$ on $\mathfrak{m}$ has a non-zero trivial subrepresentation. From the above list it is obvious that  this is the case only in  (2)-(5).
Therefore we only need to deal with the cases of (2)-(5).

In (2) and (3), where $G=U(n)$ or $SU(n)$, the full isometry
group of any $G$-invariant non-Riemannian Randers metric must be $U(n)$.
We will study this case in  more detail  in Section \ref{u-isometry}.
In (4) and (5), the full isometry group can be $Sp(n)$, $Sp(n)U(1)$ or $U(2n)$, in this case our main focus will be on the group
$Sp(n)U(1)$.

\section{CW-translations of  left invariant Randers metrics on $SU(2)$}
This work is motivated by the particular example $S^3$ which can be
regarded as $SU(2)=SU(2)/SU(1)$, $Sp(1)=Sp(1)/Sp(0)$,
 $U(2)/U(1)$ or $Spin(1)U(1)/U(1)$  appearing in
each case of (2)-(5) in Lemma \ref{homospacelistforsphere}.

Let $F$  be a non-Riemannian homogeneous Randers metric on $S^3=SU(2)/{e}$. There is no
Killing vector field of non-zero constant length generated by the elements of
$\mathfrak{g}=\mathfrak{su}(2)$ (see \cite{DXP}).
Now let us see if we can find a Killing vector field of constant length from
the Lie algebra of the full connected isometry group
$U(2)=Sp(1)U(1)$. In $U(2)$, the center vectors generate
some special Killing vector fields. These Killing vector fields generate CW-translations of the symmetric
metric on $S^3$. Moreover,  they have constant length with respect to any $U(2)$-invariant Finsler metric on $S^3$.
This case is uninteresting and we just ignore them. So let us try to find those Killing vector fields of constant length generated by non-central elements of
$\mathfrak{u}(2)$.

Denoting $\mathfrak{g}=\mathfrak{u}(2)$ and $\mathfrak{h}=\mathbb{R}$ the Lie algebras of $G$ and $H$ respectively, we have a Lie algebra decomposition $\mathfrak{g}=\mathfrak{u}(2)=\mathfrak{su}(2)\oplus
\mathbb{R}$. The subalgebra $\mathfrak{h}$ is generated by an element of the form
$(V,1)\in\mathfrak{g}$ with $V\neq 0$.
On $\mathfrak{u}(2)$, there is a standard inner product, i.e., $\langle A,B\rangle_{eq}=-trAB$. The above decomposition
of $\mathfrak{u}(2)$ is orthogonal with respect to this inner product.  Moreover, the  restriction of this inner product to $\mathfrak{m}\cong \mathfrak{su}(2)$
induces the the standard Riemannian metric on $S^3$.
For any
$X$ in $\mathfrak{su}(2)$ with $|X|>|V|$,  the ${\rm Ad}_G$-orbit
of $(X,1)$ is a  $2$-dimensional round sphere centered at $(0,1)$ with respect to  $\langle\,,\,\rangle_{eq}$.
The projection of this orbit to
$\mathfrak{m}=\mathfrak{g}/\mathfrak{h}=\mathfrak{su}(2)$ is a sphere of
the same radius, with  center  shifted to $-V$. By the assumption, this  sphere
still surrounds the origin. This observation gives a method  to
find  homogeneous Randers metrics with the prescribed indicatrix, such that
$(X,1)$ generates a Killing vector field of constant length. In fact,
up to a constant scalar, $\beta$ is the
dual of $V$ with respect to $\langle\,,\,\rangle_{eq}$, $\alpha$ has an
ellipsoid indicatrix, which is  a round sphere with respect to the metric $langle\,,\,\rangle_{eq}$, with center stretched in
the direction of $V$ (?). Both $\alpha$ and $\beta$ are $Ad_G$-invariant(?), so they are
 $Ad_{V}$-action as well.

 %ÎʺŴ¦²»Ì«Çå³þ£¬Ó¦¸ÃÖ±½Óд³öÖÐÐÄ¡£ºóÃæÊÇ Ad_G »¹ÊÇ Ad_V?

Once we have found a non-vanishing Killing vector  field of constant length of
a homogeneous Randers metric $F$,  we can find an
$Ad_G$-orbit of Killing vector fields of the same constant length. It is easily seen that these
Killing vector fields exhaust all the tangent directions at the origin. By Theorem
\ref{local-rela},  the homogeneous non-Riemannian Randers metrics
 constructed above on $S^3$ are restrictively CW-homogeneous.

As a by-product, this construction can be generalized to other connected
compact Lie groups.

\begin{proposition}
On any connected compact Lie group $G$, there is a  left invariant non-Riemannian
Randers metric $F$ which makes $(M, F)$ a   restrictively CW-homogeneous Finsler space.
\end{proposition}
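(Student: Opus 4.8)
The plan is to reproduce on an arbitrary $G$ the mechanism that worked above for $S^{3}=SU(2)$: there the point was to use, for $SU(2)$ with a suitable left invariant Randers metric, the isometry group $L_{G}\cdot R_{T}$ ($T$ a maximal torus), and to take the indicatrix to be the bi-invariant unit sphere pushed off the origin. Concretely, fix a bi-invariant inner product $\langle\,,\,\rangle$ on $\mathfrak{g}$, with norm $|\cdot|$; fix a maximal torus $T\subseteq G$ with Lie algebra $\mathfrak{t}$, and a vector $V\in\mathfrak{t}$ with $|V|<1$. Put $S=\{u\in\mathfrak{g}:|u|=1\}$. Since $|V|<1$, the shifted sphere $\Sigma:=S+V$ is a smooth strictly convex hypersurface with the origin in its interior, hence the indicatrix at $e$ of a Minkowski norm $\|\cdot\|$ on $\mathfrak{g}$; being a translated ellipsoid, this norm is of Randers type, $\|\cdot\|=\alpha_{0}+\beta_{0}$ with $\beta_{0}$ a nonzero multiple of $\langle\,\cdot\,,V\rangle$ — equivalently it is the norm produced by Zermelo navigation with data $(\langle\,,\,\rangle,V)$ — so it is non-Riemannian. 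Spreading $\|\cdot\|$ over $G$ by left translations produces a left invariant non-Riemannian Randers metric $F$ on $G$. Because $\operatorname{Ad}(t)$ is orthogonal and fixes $V$ for every $t\in T$, the norm $\|\cdot\|$ is $\operatorname{Ad}(T)$-invariant, so $F$ is invariant under $R_{T}$ as well, hence under $\widetilde{G}:=L_{G}\cdot R_{T}$. Thus $G=\widetilde{G}/\widetilde{H}$, where $\widetilde{H}$ has Lie algebra isomorphic to $\mathfrak{t}$, and I identify $\mathfrak{m}=\widetilde{\mathfrak{g}}/\widetilde{\mathfrak{h}}$ with $\mathfrak{g}=T_{e}G$ by evaluation at $e$.

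A Killing field of $F$ coming from $\widetilde{\mathfrak{g}}$ has the form $\xi_{A,B}=A^{R}+B^{L}$ with $A\in\mathfrak{g}$, $B\in\mathfrak{t}$ (here $A^{R}$, $B^{L}$ denote the right- and left-invariant fields with these values at $e$), so that $\xi_{A,B}(e)=A+B$; a short computation with conjugations shows that its $\operatorname{Ad}_{\widetilde{G}}$-orbit is $\mathcal{O}_{A}\times\{B\}$, where $\mathcal{O}_{A}$ is the adjoint orbit of $A$, and this projects onto $\mathcal{O}_{A}+B\subseteq\mathfrak{m}$. The key observation is that for \emph{every} $A\in\mathfrak{g}$ the field $Z_{A}:=\xi_{A,|A|V}$ has constant $F$-length. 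Indeed $\mathcal{O}_{A}$ lies on the $\langle\,,\,\rangle$-sphere of radius $|A|$ about $0$, so $\mathcal{O}_{A}+|A|V$ lies on $|A|(S+V)=|A|\Sigma$, i.e.\ inside the $F$-sphere of radius $|A|$ at $e$; by the criterion of the lemma above relating constant length of a Killing field to the projection of its $\operatorname{Ad}$-orbit into the indicatrix — equivalently, by a direct computation using left invariance and the orthogonality of $\operatorname{Ad}(x^{-1})$ — this gives $|Z_{A}|\equiv|A|$. Now $Z_{A}(e)=A+|A|V$, and as $A$ runs over $\mathfrak{g}$ these vectors fill the whole cone over $\Sigma$; since $\Sigma$ encloses the origin, that cone is all of $T_{e}G$. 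Hence at every point of $G$ the Killing fields of constant length exhaust all tangent directions.

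From here the argument would conclude exactly as in the $S^{3}$ case. Being compact, $G$ is complete with positive injectivity radius, so by Theorem~\ref{local-rela} each $Z_{A}$ generates, for all sufficiently small $t>0$, a CW-translation $\phi_{t}^{Z_{A}}=L_{\exp tA}\,R_{\exp(t|A|V)}$ with $\phi_{t}^{Z_{A}}(e)=\exp(tA)\exp(t|A|V)$. Letting $A$ range over a fixed bounded set and $t$ over a small interval, the resulting end-points cover a full neighborhood of $e$: the first-order term $t(A+|A|V)$ already sweeps out an $F$-ball about $0$ by the preceding paragraph, and the higher-order Baker--Campbell--Hausdorff corrections merely perturb this. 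Left translation carries the conclusion to every point, so any two sufficiently close points of $G$ are interchanged by a CW-translation; that is, $(G,F)$ is restrictively CW-homogeneous. The step I expect to need the most care is precisely this last one — upgrading ``constant-length Killing fields fill every tangent direction'' to ``their short-time flows cover a neighborhood''; verifying that the translated sphere $\Sigma$ is an admissible (smooth, strongly convex) Randers indicatrix is immediate, and the remainder is a routine transcription of the $S^{3}$ computation.
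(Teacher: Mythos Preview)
Your proof is correct and is essentially the paper's argument in different packaging: the paper realizes $G$ as $(G\times S^1)/H'$ with $H'$ the diagonal circle generated by $(V,1)$---which, unwound, is exactly your action of $\widetilde G=L_G\cdot R_{\{\exp tV\}}$ on $G$---and then takes the bi-invariant unit sphere shifted by $V$ as indicatrix, so that each $(X,1)$ with $|X|=1$ has its $\mathrm{Ad}$-orbit projecting into that indicatrix, just as your $Z_A$ does. The only differences are cosmetic: you use the full maximal torus $T$ rather than a single circle (so you avoid the paper's requirement that $\exp(tV)$ close up), and you give more detail on the passage from ``constant-length Killing fields fill every tangent direction'' to restrictive CW-homogeneity than the paper does.
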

\begin{proof}
Let $G'=G\times S^1$, whose Lie  algebras is
$\mathfrak{g}'=\mathfrak{g}\oplus \mathbb{R}$. Select a nonzero $V$ in ${\mathfrak g}$ such that the one-parameter subgroup $\exp t(V)$ is isomorphic to $S^1$. Let $H'$ be the subgroup of $G'$ generated by
$(V,1)$. Obviously $H'\cong S^1$ and $G'/H'$ is a homogeneous space for
$G$. Choose any bi-invariant metric on $G$ and denote by  $\langle \,,\,\rangle_{eq}$
the inner production  induced on $\mathfrak{g}$.
We can assume $\langle V,V\rangle_{eq}<1$. The sphere
$S=\{(X,1)|\langle X,X\rangle_{eq}^{1/2}=1\}$ is the union of some ${\rm Ad}_{G'}$-orbits.
Projected to $\mathfrak{m}\cong\mathfrak{g}$, its center is
shifted to $-V$. Using the above argument one can similarly find a $G'$-invariant   Randers metric  on $G\simeq G'/H'$ with
 the above sphere in $\mathfrak{g}$ as the indicatrix. Then any
vector $(X,1)\in S$ generates a Killing vector field of constant length $1$, and
 these vectors exhaust all the tangent directions. Therefore this Randers metric
makes $G$ a restrictively CW-homogeneous Finsler spaces.
\end{proof}

For general compact Lie groups, it is still unknown whether the word ``restrictively"
can be removed. But for the special case
$SU(2)$, the answer is positive.

\begin{proposition}
On $S^3$, there are non-Riemannian homogeneous Randers metrics which
makes it CW-homogeneous.
\end{proposition}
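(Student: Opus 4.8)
The plan is to take for $F$ one of the homogeneous non‑Riemannian Randers metrics constructed above, written in its Zermelo navigation form on $S^{3}=SU(2)$. Let $h$ be the bi‑invariant round metric of constant curvature $1$, fix an $h$‑unit vector $V\in\mathfrak{su}(2)$ and a constant $0<\epsilon<1$, and let $F$ be the Randers metric obtained from $h$ by the left‑invariant wind $W(x)=\epsilon\,xV$ (i.e.\ $\epsilon$ times the Killing field generating the right translations $R_{\exp(\mathbb{R}V)}$), whose flow is $\psi_{s}(x)=x\exp(\epsilon sV)$. Since $W$ is fixed exactly by $SU(2)_{L}\times\exp(\mathbb{R}V)_{R}$ one has $I_{0}(S^{3},F)\cong U(2)$; the constant‑length Killing fields produced above are $\mathcal{X}_{Z}(x)=Zx+\epsilon\,xV$ with $|Z|_{h}=1$ (indeed $F(\mathcal{X}_{Z}(e))=|Z+\epsilon V-W(e)|_{h}=|Z|_{h}=1$), and the one‑parameter group of isometries they generate is $\phi^{Z}_{t}(x)=\exp(tZ)\,x\,\exp(\epsilon tV)$.

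The decisive step is to upgrade Theorem~\ref{local-rela} for these particular fields from small $t$ to all $t\in[0,\pi]$. I would use that, because $W$ is Killing for $h$ with $|W|_{h}<1$, the distance of $F$ is given by $d_{F}(p,q)=T$, where $T$ is the unique positive number with $d_{h}(p,\psi_{-T}(q))=T$; uniqueness holds because $T\mapsto d_{h}(p,\psi_{-T}(q))-T$ is strictly decreasing (the $h$‑speed of $\psi$ is $<1$). Apply this with $p=x$ and $q=\phi^{Z}_{t}(x)$: then $\psi_{-T}(\phi^{Z}_{t}(x))=\exp(tZ)\,x\,\exp(\epsilon(t-T)V)$, and the choice $T=t$ makes the drift factor trivial, leaving $d_{h}(x,\exp(tZ)x)=d_{h}(e,\exp(t\,\mathrm{Ad}_{x^{-1}}Z))=t$ — here $h$ is bi‑invariant, so $d_{h}(e,\cdot)$ is a class function and $\mathrm{Ad}$ preserves $|Z|_{h}=1$, and $t\le\pi$ is used. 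By uniqueness of $T$, $d_{F}(x,\phi^{Z}_{t}(x))=t$ for every $x\in S^{3}$, so each $\phi^{Z}_{t}$ with $t\in[0,\pi]$ is a Clifford–Wolf translation. The point to stress is that the apparent dependence on the angle between $\mathrm{Ad}_{x^{-1}}Z$ and $V$ is eliminated precisely at the solution $T=t$.

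Finally I would check that these Clifford–Wolf translations act transitively. Given $p\neq q$, solving $\phi^{Z}_{t}(p)=q$ means $\exp(tZ)=q\exp(-\epsilon tV)p^{-1}$ with $|Z|_{h}=1$ and $t\in[0,\pi]$, i.e.\ $\angle\bigl(q\exp(-\epsilon tV)p^{-1}\bigr)=t$, where $\angle(g)\in[0,\pi]$ is the rotation angle of $g\in SU(2)$; the function $f(t)=\angle\bigl(q\exp(-\epsilon tV)p^{-1}\bigr)-t$ is continuous on $[0,\pi]$ with $f(0)=\angle(qp^{-1})>0$ and $f(\pi)\le\pi-\pi=0$, so it has a zero $t_{\ast}\in(0,\pi]$, and taking $Z_{\ast}$ to be the corresponding unit axis yields the Clifford–Wolf translation $\phi^{Z_{\ast}}_{t_{\ast}}$ carrying $p$ to $q$. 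This makes $(S^{3},F)$ Clifford–Wolf homogeneous, and $F$ is non‑Riemannian by construction. The main obstacle is the second paragraph: a product of Clifford–Wolf translations need not be one, so far points cannot be reached merely by iterating the local statement; what saves the argument is the closed form for $d_{F}(x,\phi^{Z}_{t}(x))$ from the navigation picture together with the rigidity of the solution $T=t$. (If one prefers to avoid navigation entirely, the same estimate follows by producing the competitor path $s\mapsto\exp(sZ)x\exp(\epsilon sV)$, $s\in[0,t]$, which has $F$‑length exactly $t$ because $\mathcal{X}_{Z}$ has unit length and is minimising because its un‑drifted curve $s\mapsto\exp(sZ)x$ is a minimising $h$‑geodesic.)
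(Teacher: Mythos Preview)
Your argument is correct and takes a genuinely different route from the paper's. The paper works directly with the flow curves $\phi_t(g)=\exp(tX)\,g\,\exp(-tV)$ and proves they are minimising on $[0,\pi]$ by a geometric ``focusing'' argument: all of them hit the single point $-g\exp(\pi V)$ at $t=\pi$, and a comparison of geodesic spheres in the round metric rules out any earlier intersection between two such curves, so each realises the $F$--distance. Transitivity then follows because every point lies on one of these minimising curves for $t\in[0,\pi]$.

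You instead recast the metric in Zermelo navigation form and exploit the closed formula $d_F(p,q)=T$ with $d_h\bigl(p,\psi_{-T}(q)\bigr)=T$, valid because the wind $W$ is $h$--Killing with $|W|_h<1$; plugging in $q=\phi^Z_t(x)$ collapses the drift and gives $d_F(x,\phi^Z_t(x))=t$ in one line, while transitivity is obtained from the intermediate value theorem applied to the rotation angle of $q\exp(-\epsilon tV)p^{-1}$. Your approach is shorter and more conceptual, and it isolates exactly where bi-invariance of $h$ and the inequality $t\le\pi$ enter; the paper's approach, on the other hand, avoids invoking the navigation distance formula and stays closer to the raw geometry of $SU(2)$, which makes the antipodal focusing phenomenon (all $F$--geodesics from $g$ refocus at $-g\exp(\pi V)$) visible as a by-product. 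One small remark: your sentence ``indeed $F(\mathcal X_Z(e))=\ldots$'' only checks the length at $e$; it would be worth saying explicitly, as you do later implicitly via $|\mathrm{Ad}_{x^{-1}}Z|_h=|Z|_h$, that $F(\mathcal X_Z(x))=|Zx|_h=1$ holds at every $x$.
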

\begin{proof}
 The homogeneous Randers metric we choose is the one constructed as above.
The proof is carried out by a closer observation of the geometry of
the Randers metric and Killing vector  fields in the construction. Choose a
bi-invariant inner product on $\mathfrak{su}(2)$ such that the induced
metric makes $SU(2)$ the standard unit sphere.  Without losing generality,
we can assume  that $X$ has length $1$ and $V$ has length $l<1$ with respect to this metric. Suppose $(X,1)$ generates the Killing vector field (of constant length)
of the Randers metric $F$. The flow of isometries generated by
$(X,1)$ are $\phi_t(g)=\mbox{exp}(tX)g\mbox{exp}(-tV)$ which gives a
geodesic at $g$. Notice that $\exp(\pi X)=-\mbox{id}$ for each $X$ with
length $1$, since the length of $X$ is 1 implies that the eigenvalues of $X$
are $\pm\sqrt{-1}$. Thus $\exp(\pi X)$ is a unitary conjugation
of $\exp (\mbox{diag}(\pi\sqrt{-1},-\pi\sqrt{-1}))=-\mbox{id}$.

Since these $X$ can exhaust all the unit vectors of $\mathfrak{su}(2)$, the
geodesics in all directions starting from $g$ with $t=0$ will end at
$-g\mbox{exp}(\pi V)$ with $t=\pi$.  This means that all those geodesics
from $g$ to $-g\exp (\pi V)$ have the same length $\pi$.

Any point can be
reached by a geodesic from $g$ for $t\in [0,\pi]$.
Otherwise we can choose a shortest geodesic from $g$ to it, passing $-g\mbox{exp}(\pi V)$
in the midway. Then the geodesic from $g$ to $-g\mbox{exp}(\pi V)$ can be changed to another one
which turns a angle at $-g\mbox{exp}(\pi V)$, and the new path is still a shortest path.
This is a contradiction because the new path is not a smooth geodesic.(?)
%ÇëÈ·ÈÏÎÞÎó¡£

Those
geodesics do not intersect each other when $t$ is restricted to
$(0,\pi)$. Otherwise, there will be a pair of unit vectors $X_1$ and $X_2$ in
$\mathfrak{su}(2)$, and $t_1,t_2\in(0,\pi)$, such that
\begin{equation}
\exp (t_1 X_1)g\exp (-t_1 V)=\exp (t_2
X_2)g\exp (-t_2 V).
\end{equation}
If $t_1=t_2$, then we have $X_1={\rm Ad}_g X_2$. Thus $X_1=X_2$.

If $t_1\neq t_2$,  say $t_2< t_1$, then we have
\begin{equation}
\exp(t_1 X_1)=\exp(t_2 X_2)\exp((t_1-t_2){\rm Ad}_g V).
\end{equation}
The left side gives a point on a  geodesic sphere with radius $t_1$
centered at the point $e_0$ representing the identity matrix. The
right side gives a point on a geodesic sphere with radius $(t_1-t_2)l$
which is centered at the point $\exp(t_2 X_2)$ on  geodesic sphere centered at $e_0$ with  radius
$t_2$. Therefore there is  a path
from $e_0$ to the point given by the left side that has a length
smaller than $t_1$, which is a contradiction.

So all those geodesic flow curves from $g$ to $-g\mbox{exp}(\pi V)$
are the shortest ones. This property only depends on the length of
the $X$'s and the length of $V$. Change of $g$ only results in a change
of unitary conjugation, without changing the lengthes. Therefore given any two points $g_1$, $g_2$ in $SU(2)$, one can find a CW-translation $\phi_t$, with $t\in[0,\pi]$,
where $\phi_t$ is the flow of a Killing vector field generated by  certain  $(X,1)$, such that $\phi_t(g_1)=g_2$. This completes the proof of the proposition.
\end{proof}

\section{Randers Spheres with unitary isometry groups}\label{u-isometry}
\label{s2n+1-section}
Let $F=\alpha+\beta$ be a non-Riemannian homogeneous Randers metric on $S^{2n+1}$,
such that $I_0(S^{2n+1},F)=U(n+1)$. Then the sphere can be presented as $G/H$, with
$G=U(n)$ and $H=U(n-1)\subset G$.
 Their Lie algebras are $\mathfrak{g}=\mathfrak{u}(n)$ and
$\mathfrak{h}=\mathfrak{u}(n-1)$ respectively. The tangent space at the origin is the quotient space
$\mathfrak{m}=\mathfrak{m}_0\oplus\mathfrak{m}_1$, where
$\mathfrak{m}_0=\mathbb{R}$ and
$\mathfrak{m}_1=\mathbb{C}^n$. The isotropy subgroup acts trivially on $ \mathfrak{m}_0$  and acts on $ \mathfrak{m}_1$ by left multiplication.
The projection of $X\in \mathfrak{g}$ to $\mathfrak{m}$ is equal to $X(0,\ldots,0,\sqrt{-1})^*$.

Since the underlying Reiamnnian metric  $\alpha$ is also invariant under $G$, it induces an
${\rm Ad}_H$-invariant linear metric (still denoted by $\alpha$) on $\mathfrak{m}$, which must have
the form $\alpha^2(q,u)=a|q|^2+bu^* u$,
$\forall q\in \mathbb{R}$ and $u\in\mathbb{C}^n$, with positive constant $a$ and $b$.
The standard inner product, i.e., the one with $a=b=1$,
is induced by the symmetric standard Riemannian
metric. The corresponding inner product on $\mathfrak{m}$ will be denoted by $\langle\,,\,\rangle_{eq}$.

The non-vanishing $1$-form $\beta$ is also $G$-invariant, so
it is induced by an $\mbox{\rm Ad}(H)$-invariant vector $V \in {\mathfrak m}$. This means that $V$ must be contained in ${\mathfrak m}_0$.  Thus $V$ has the form
$\mbox{\rm diag}(0,\ldots,0,c)^T$, with $|c|<\sqrt{a}$.

Suppose there is a vector $X\in \mathfrak{u}(n+1)$ which generates a Killing vector field of
constant length $L>0$ with respect to $F=\alpha+\beta$. For simplicity, we  assume that $X$ is not
in the center of $\mathfrak{u}(n+1)$,
since any vector in the center of $\mathfrak{u}(n+1)$ generates a Killing vector field of
constant length for any homogeneous Finsler metric on $S^{2n+1}=U(n+1)/U(n)$.
Moreover, it is also a CW-translation of the CW-homogeneous  Riemannian metric on $S^{2n+1}$ (i.e., the standard metric).

Up to a unitary conjugation, we can assume $X$ to be
diagonal. By the action of the Weyl group, each eigenvalue of $X$ can
appear at the down right corner. The projection of those diagonal matrices
in the orbit of $X$ to
$\mathfrak{m}$ can be denoted as $(0,\ldots,0,a_i)$, in which
$a_1\sqrt{-1},\ldots,a_{n+1}\sqrt{-1}$ are all the eigenvalues of
$X$. Then these $a_i$'s must be the solution of the equation
\begin{equation}
\sqrt{a}|x|+cx=L.
\label{eq}
\end{equation}
By the assumption, $X$ has at least two distinct eigenvalues.  Then from \eqref{eq} it is easily seen that  $X$ has
 exactly two distinct eigenvalues with opposite signs. So by a suitable unitary conjugation, we can
assume that $X=\sqrt{-1}(x_1
\mbox{I}+x_2\mbox{diag}(-m\mbox{I}_l,l\mbox{I}_m))$, where $l$
and $m$ are natural numbers satisfying $l+m=n+1$, $x_2\neq 0$ and
\begin{equation}
(x_1-mx_2)(x_1+lx_2)<0.\label{4}
\end{equation}
Suppose $U\in U(n+1)$. Denote   its last row
denoted by $(u^*,v^*)$. Then the value of the metric $\alpha^2$ at the projection
of $UXU^*$ in $\mathfrak{m}$ is
\begin{eqnarray}
b x_2^2(m^2(|u|^2-|u|^4))+l^2(|v|^2-|v|^4)+2ml|u|^2|v|^2)\nonumber \\
+a (x_2(-m|u|^2+l|v|^2)+x_1)^2.
\end{eqnarray}
Denote $t=l|v|^2-m|u|^2\in [-m,l]$. Since
$|u|^2+|v|^2=1$, we have
\begin{equation}
|u|^2=\frac{1-t}{n+1},\\
|v|^2=\frac{t+m}{n+1}.\\
\end{equation}
Then the  above $\alpha^2$ term can be summarized as
\begin{equation}
f(t)=(a-b)x_2^2 t^2+[(l-m)x_2^2 b+2a x_1 x_2]t+(x_2^2 b
ml+a x_1^2).
\end{equation}

The $\mathfrak{m}_0$-coordinate of the projection of $UXU^*$ is $x_2 t+x_1$, so
its $\beta$ value is $c(x_2 t+x_1)$.

The  Killing vector field generated by $X$ having a constant length $L>0$ is equivalent
to the equation
\begin{equation}
f(t)\equiv (-c(x_2 t+x_1)+L)^2, \forall t\in [-m,l].
\end{equation}
%For $f(t)$ to be a complete square, we must have
%\begin{equation}
%(n+1)^2 x_2^2 b=4(mx_2-x_1)(lx_2+x_1)a.
%\end{equation}

For any $m$,$l$, $x_1$, $x_2$ and $L$, we can uniquely solve the triple $(a,b,c)$,
\begin{eqnarray}
a&=&b+c^2,\\
\label{a}
b&=&L^2[(\frac{(n+1) x_2}{2})^2-(\frac{l-m}{2}x_2+x_1)^2]^{-1},\\
\label{b}
c&=&-\frac{b}{L}(\frac{l-m}{2}x_2+x_1).
\label{c}
\end{eqnarray}

To summarize, we have proved the following theorem.
\begin{theorem}\label{10}
For any $X\in u(n+1)$ with exactly two eigenvalues of different signs and different absolute values, there is a
non-Riemannian homogeneous Randers metric on $S^{2n+1}=U(n+1)/U(n)$,
which is unique up to a scalar, such that $X$  generates a CW-translation.
\end{theorem}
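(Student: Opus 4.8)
The plan is to convert the constant-length requirement on $X$ into a polynomial identity in one real variable and then solve for the metric data $(a,b,c)$; most of this computation is already carried out in the discussion above, so what remains is mainly bookkeeping together with one sign check. First I would normalise $X$: since $X$ has exactly two eigenvalues $\sqrt{-1}\lambda_1$ and $\sqrt{-1}\lambda_2$ with $\lambda_1\lambda_2<0$, a unitary conjugation makes $X$ diagonal, and writing $l,m$ for the two multiplicities (so $l+m=n+1$) and putting
\[
x_2=\frac{\lambda_2-\lambda_1}{n+1}\neq 0,\qquad x_1=\frac{l\lambda_1+m\lambda_2}{n+1},
\]
one recovers the normal form $X=\sqrt{-1}\bigl(x_1\mathrm{I}+x_2\,\mathrm{diag}(-m\mathrm{I}_l,l\mathrm{I}_m)\bigr)$ used above, with \eqref{4} being exactly $\lambda_1\lambda_2<0$ and the remaining hypothesis ``different absolute values'' saying $\lambda_1+\lambda_2\neq0$, i.e.\ $\frac{l-m}{2}x_2+x_1\neq0$.

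By the orbit-projection criterion for Killing fields of constant length recalled above, together with the computation of the $\alpha$- and $\beta$-values along the $\mathrm{Ad}_{U(n+1)}$-orbit of $X$, the assertion that $X$ generates a Killing field of constant length $L$ for the $U(n+1)$-invariant Randers metric with data $(a,b,c)$ is equivalent to the identity $f(t)\equiv\bigl(-c(x_2t+x_1)+L\bigr)^2$ for all $t\in[-m,l]$. Both sides are quadratic in $t$, so an identity on an interval forces equality of the three coefficients; I would solve this system by descending degree, the $t^2$-coefficients giving $a=b+c^2$, the $t^1$-coefficients then giving \eqref{c}, and the $t^0$-coefficients together with the previous two giving \eqref{b}. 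Thus for each target length $L$ there is exactly one candidate triple \eqref{a}--\eqref{c}; and since any non-Riemannian homogeneous Randers metric making $X$ a Killing field of some constant length $L$ must satisfy the same identity, such a metric is determined by $L$, and replacing $L$ by $\lambda L$ multiplies $b,a$ by $\lambda^2$ and $c$ by $\lambda$, hence $\alpha,\beta$ by $\lambda$, so $F$ is unique up to a positive scalar.

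It remains to check that \eqref{a}--\eqref{c} actually define an admissible non-Riemannian metric, and this is the only place the hypotheses are used. The denominator in \eqref{b} factors as
\[
\Bigl(\frac{(n+1)x_2}{2}\Bigr)^2-\Bigl(\frac{l-m}{2}x_2+x_1\Bigr)^2=-(x_1-mx_2)(x_1+lx_2),
\]
which is positive precisely by \eqref{4}; hence $b>0$, then $a=b+c^2>0$ so $\alpha^2(q,u)=a|q|^2+bu^*u$ is positive definite, and $c^2<b+c^2=a$ gives $|c|<\sqrt{a}$, so $F=\alpha+\beta$ is a genuine Randers metric, $U(n+1)$-invariant by construction and hence homogeneous. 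It is non-Riemannian because $c=-\frac{b}{L}\bigl(\frac{l-m}{2}x_2+x_1\bigr)\neq0$ by the ``different absolute values'' hypothesis. Finally, for this metric $X$ induces a Killing field of constant length $L$, so by Theorem \ref{local-rela} its flow consists of CW-translations for all sufficiently small $t>0$, which completes the construction. The main obstacle is conceptual rather than computational: one must recognise that the two spectral hypotheses on $X$ are precisely what make the denominator of $b$ positive and $c$ nonzero; the rest is the routine matching of quadratic coefficients.
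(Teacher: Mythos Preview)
Your argument is correct and follows essentially the same route as the paper: reduce the constant-length condition to the quadratic identity $f(t)\equiv(-c(x_2t+x_1)+L)^2$, match coefficients to obtain \eqref{a}--\eqref{c}, and invoke Theorem~\ref{local-rela}. Your write-up is in fact more complete than the paper's, since you explicitly verify that the resulting triple $(a,b,c)$ defines an admissible Randers metric (positivity of $b$ via the factorisation of the denominator, $|c|<\sqrt a$) and pinpoint where each spectral hypothesis on $X$ is used; the paper leaves these checks implicit.
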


This theorem can also be stated as the following.
\begin{theorem}\label{5}
Let  $F$ be a non-Riemannian homogeneous Randers metric
  on $S^{2n+1}=U(n+1)/U(n)$,
determined by the parameters $(a,b,c)$  defined by \eqref{a},\eqref{b} and \eqref{c}. Then there exists a  non-vanishing
 Killing vector field of constant length which is not in the center of ${\mathfrak u}(n+1)$ if and
only if $a=b+c^2$. The Killing vector fields of constant length $L>0$ generated by elements of the
center of $\mathfrak{u}(n+1)$ are in one-to-one correspondence with the unitary matrices with
 exactly  two eigenvalues $(-\frac{Lc}{b}\pm\sqrt{\frac{L^2}{b}+\frac{L^2 c^2}{b^2}})\sqrt{-1}$.
\end{theorem}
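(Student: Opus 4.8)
The plan is to read Theorem~\ref{5} off from the computation that precedes Theorem~\ref{10}, adding only a short algebraic simplification and a consistency check; throughout I keep the notation of Section~\ref{u-isometry}, so $\mathfrak{m}=\mathbb{R}\oplus\mathbb{C}^n$ with $\alpha^2(q,u)=a|q|^2+bu^*u$ and $\beta$ dual to $V=\mathrm{diag}(0,\dots,0,c)^T$, $|c|<\sqrt a$. For the ``only if'' half of the first assertion, suppose $X\in\mathfrak{u}(n+1)$ is not central and generates a Killing field of constant length $L>0$ for $F$. Diagonalizing $X$ and moving each eigenvalue to the lower-right corner by the Weyl group, the lemma characterizing constant-length Killing fields (projection of the $\mathrm{Ad}_G$-orbit lies in the indicatrix) shows that every eigenvalue-coordinate $a_i$ solves \eqref{eq}, i.e. $\sqrt a\,|x|+cx=L$. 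Since $|c|<\sqrt a$, the map $x\mapsto\sqrt a\,|x|+cx$ is strictly increasing on $[0,\infty)$ and strictly decreasing on $(-\infty,0]$, so \eqref{eq} has exactly the two roots $x_+=L/(\sqrt a+c)>0$ and $x_-=-L/(\sqrt a-c)<0$; being non-central, $X$ then has precisely these two eigenvalues $\sqrt{-1}\,x_\pm$, of opposite sign. Putting $X$ in the normal form $\sqrt{-1}(x_1\mathrm{I}+x_2\,\mathrm{diag}(-m\mathrm{I}_l,l\mathrm{I}_m))$ and imposing the full constant-length condition $f(t)\equiv(-c(x_2t+x_1)+L)^2$ on the nondegenerate interval $[-m,l]$, the two sides are polynomials of degree $\le 2$ and hence equal identically; comparing the $t^2$-coefficients gives $(a-b)x_2^2=c^2x_2^2$, i.e. \eqref{a}, $a=b+c^2$.

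For the ``if'' half I would run this argument backwards. Assuming $a=b+c^2$, fix any $L>0$ and any positive integers $l,m$ with $l+m=n+1$, and solve \eqref{b}--\eqref{c} for $(x_1,x_2)$: writing $s=\tfrac{l-m}{2}x_2+x_1$ and $r=\tfrac{n+1}{2}x_2$, these equations become $s=-Lc/b$ and $r^2-s^2=L^2/b$, whence $r^2=L^2/b+L^2c^2/b^2>0$, so $x_2\neq 0$; moreover the unordered pair $\{x_1-mx_2,\ x_1+lx_2\}$ of eigenvalue-coordinates of the resulting $X$ has the same sum and the same absolute difference as $\{x_+,x_-\}$, so it equals $\{x_+,x_-\}$, and its product is $x_+x_-<0$, which is exactly \eqref{4} (in particular $b>0$) and shows $X$ is non-central. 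Since \eqref{a}, \eqref{b}, \eqref{c} are together equivalent to the polynomial identity $f(t)\equiv(-c(x_2t+x_1)+L)^2$ (they match its three coefficients, using $x_2\neq 0$ and \eqref{4}), this $X$ generates a Killing field of constant length $L$. This is precisely Theorem~\ref{10} read backwards, which proves the first assertion.

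For the classification in the second assertion, the analysis above shows that every non-central constant-length Killing field of length $L$ is, up to unitary conjugation, an element of $\mathfrak{u}(n+1)$ with exactly the two distinct eigenvalues $\sqrt{-1}\,x_\pm$, $x_+=L/(\sqrt a+c)$, $x_-=-L/(\sqrt a-c)$; substituting $\sqrt a=\sqrt{b+c^2}$ and rationalizing the denominators turns these into $x_\pm=-\tfrac{Lc}{b}\pm\sqrt{\tfrac{L^2}{b}+\tfrac{L^2c^2}{b^2}}$, which are the eigenvalue-coordinates in the statement. Conversely, for any admissible multiplicities $l,m$ (with $l+m=n+1$) the identities $\tfrac{l-m}{2}x_2+x_1=\tfrac12(x_++x_-)$ and $\tfrac{n+1}{2}x_2=\tfrac12(x_+-x_-)$ show that \eqref{b}--\eqref{c} hold, so, since $a=b+c^2$, every element of $\mathfrak{u}(n+1)$ with exactly these two eigenvalues generates a constant-length Killing field of length $L$; this gives the asserted one-to-one correspondence. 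Because the metric data were already reduced to the Lie-algebra computation preceding Theorem~\ref{10}, I do not expect a serious obstacle; the one point deserving care is to verify that \eqref{a}--\eqref{c} are sufficient and not merely necessary, and that the side conditions $x_2\neq 0$, \eqref{4} and $|c|<\sqrt a$ are automatically consistent, so that the correspondence in the second assertion is genuinely onto.
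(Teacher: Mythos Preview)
Your proposal is correct and follows essentially the same route as the paper: both reduce the constant-length condition to the polynomial identity $f(t)\equiv(-c(x_2t+x_1)+L)^2$ derived just before Theorem~\ref{10}, read off $a=b+c^2$ from the leading coefficient, and identify the two admissible eigenvalues from \eqref{eq}. Your write-up is in fact more explicit than the paper's in solving \eqref{eq} for $x_\pm$, rationalizing to match the eigenvalue formula in the statement, and checking that the side conditions $x_2\neq 0$ and \eqref{4} come out automatically in the converse direction.
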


It should be noted that the Riemannian CW-translations of the symmetric space $S^{2n+1}$
can also be derived from the above discussion. In fact,  when $c=0$, we get those $X\in u(2n+1)$
whose eigenvalues have the same absolute values. They are all the Killing vector fields of constant
length of the symmetric sphere, which commute with the given one, i.e.,
$\sqrt{-1}I\in \mathfrak{u}(2n+1)$. The matrix $X$ in Theorem \ref{10} and Theorem \ref{5} can be written as
\begin{equation}
X=\sqrt{-1}(x_1+\frac{l-m}{2}x_2)I+\sqrt{-1}\frac{l+m}{2}x_2\mbox{diag}(-I_l,I_m).
\end{equation}
This means that   there is a natural correspondence between
the Killing vector fields of constant length of a homogeneous non-Riemannian Randers
metric and the pairs of Killing vector fields of constant length of a symmetric metric, with the later pair commuting
with each other and having different lengthes. Accordingly we have the correspondence for CW-translations.

The condition $a=b+c^2$ implies that the indicatrix of the Randers metric
is a sphere in $\mathfrak{m}$ with respect to the inner product $\langle\cdot,\cdot\rangle_{eq}$ (in general not centered at $0$). Therefore the projection of the $\mbox{Ad}_G$-orbit of $X$ is contained in a sphere.
In fact it is projected onto that sphere. If we choose a unitary
matrix $U$ so that its last row is $(u^*,v^*)$, with $u\in \mathbb{C}^l,v\in\mathbb{C}^m$
satisfying
$v^*=(0,\ldots,0,s)$, $s\in [-1,1]$,  and its last column is of the form
$(\sqrt{1-s^2}w^*,s)^*$, where $w$ is a unit vector in
$\mathbb{C}^n$, then for any $s\in [-1,1]$, and any
unit vector $w$, this $U$ can be found by the process of choosing a unitary basis. First
choose $(\sqrt{1-s^2}w^*,s)$, then choose the next $m-1$ vectors from
$(\sqrt{1-s^2}w^*,s)^{\perp}\cap(0,\ldots,0,1)^{\perp}$, then the others, and rearrange the
order at the end.(?)
%ÕâÒ»¶ÎÐðÊö²»Ì«Çå³þ£¬Çë¸ÄÒ»¸Ä¡£

When calculating $UXU^*$, we only need to consider the cases of
$s\in [0,1]$. The last column of
\begin{equation}
UXU^*=\sqrt{-1}U(x_1 I+x_2\mbox{diag}(-m\mbox{I}_l,l\mbox{I}_m))U^*
\end{equation}
is $\sqrt{-1}((n+1)x_2 \sqrt{1-s^2}s w^*, (n+1)x_2 s^2-mx_2+x_1)^*$.
They can give all the points on the sphere for
$\langle \cdot,\cdot\rangle_{eq}$, which is centered at $(0,(l-m)x_2/2+x_1)$
with radius $(n+1)|x_2|/2$.
So it gives all directions in $\mathfrak{m}$. The resulting non-Riemannian
homogeneous Randers metrics on $S^{2n+1}$ is
restrictively CW-homogeneous. We Shall prove in the following that they are in fact CW-homogeneous.

If $a\neq b+c^2$,
then the only Killing vector fields of constant lengths of the homogeneous Randers metric constructed by the triple are in the center  ${\mathfrak u}(1)$. They only gives a flow of CW-translations on the sphere. Obviously they are not restrictively Clifford-Wolf homogeneous.

\begin{theorem}
On $S^{2n+1}=U(n+1)/U(n)$, any non-Riemannian homogeneous
Randers metrics determined by a triple $(a,b,c)$ with $a=b+c^2$,
are Clifford-Wolf homogeneous.
\end{theorem}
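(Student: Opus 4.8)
The plan is to adapt the argument used above for $S^3=SU(2)$, carried out now on $S^{2n+1}$ regarded as the unit sphere of $\mathbb{C}^{n+1}$ with the standard linear $U(n+1)$-action, so that the base point $o$ of $U(n+1)/U(n)$ is identified with $e_{n+1}$ and the flow of the Killing field coming from $X\in\mathfrak{u}(n+1)$ is $z\mapsto\exp(tX)z$. First I would record the geometry of the constant-length Killing fields furnished by the hypothesis $a=b+c^2$: by Theorem \ref{5} and the computation preceding it, for each splitting $l+m=n+1$ and each admissible scale there is a matrix
\[
X=\sqrt{-1}\bigl(x_1\mathrm{I}+x_2\,\mathrm{diag}(-m\mathrm{I}_l,l\mathrm{I}_m)\bigr),\qquad x_2\neq0,
\]
with eigenvalues subject to \eqref{4}, which generates a Killing vector field of some constant length $L>0$, and the projections onto $\mathfrak{m}$ of the conjugates of $X$ under $U(n+1)$ cover the whole indicatrix, so that the geodesics $t\mapsto\exp(tX)z$ leave $z$ in every direction. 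Being generated by a constant-length Killing field, each flow curve $t\mapsto\exp(tX)z$ is a geodesic traversed at constant speed $L$; this is the Finslerian fact behind Theorem \ref{local-rela}, established in \cite{DXP}.

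Next I would locate a common ``reconvergence point'' shared by all these geodesics. From
\[
\exp(tX)=e^{\sqrt{-1}tx_1}\,\mathrm{diag}\bigl(e^{-\sqrt{-1}tmx_2}\mathrm{I}_l,\,e^{\sqrt{-1}tlx_2}\mathrm{I}_m\bigr)
\]
one sees that $\exp(tX)$ first becomes a scalar matrix $\zeta\mathrm{I}$ with $|\zeta|=1$ at $T=2\pi/((n+1)|x_2|)$, and that this scalar $\zeta=e^{\sqrt{-1}T(x_1-mx_2)}$ is unaffected if $X$ is replaced by any conjugate $U^{-1}XU$, since $\exp(TU^{-1}XU)=U^{-1}(\zeta\mathrm{I})U=\zeta\mathrm{I}$. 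So, fixing $z\in S^{2n+1}$ and letting $\tilde X$ run over all $U(n+1)$-conjugates of $X$ — all of which share the eigenvalues of $X$, hence the same $L$ and the same $\zeta$ — the geodesics $\gamma_{\tilde X}(t)=\exp(t\tilde X)z$ all issue from $z$ with initial velocities $\tilde Xz$ exhausting every direction, and every one of them passes through the single point $\zeta z$ at time $T$. Hence they are all geodesic segments from $z$ to $\zeta z$ of one and the same length $LT$.

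The step I expect to be the main obstacle is to show that each segment $\gamma_{\tilde X}|_{[0,T]}$ is in fact globally minimizing; granting this, $d(z,\exp(t\tilde X)z)=Lt$ for every $t\in[0,T]$. Following the $S^3$ proof, this has two parts. The first, where most of the work lies, is to show that for distinct conjugates the curves $\gamma_{\tilde X_1}$ and $\gamma_{\tilde X_2}$ are disjoint for parameters in $(0,T)$ and that each $\gamma_{\tilde X}$ is injective there; this is the analogue of the geodesic-sphere comparison done for $S^3$, where one must show that a relation $\exp(t_1\tilde X_1)z=\exp(t_2\tilde X_2)z$ with $0<t_2<t_1<T$ would present a point that lies at $F$-distance $<Lt_1$ from $z$ while purportedly lying on the geodesic sphere of radius $Lt_1$ about $z$, the central part of $X$ supplying the ``slow'' correction term played in the $S^3$ argument by the short vector $V$ with $|V|<1$. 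The second part is formal: given this non-intersection together with the fact that the $\gamma_{\tilde X}$ leave $z$ in all directions, a strictly shorter path from $z$ to some $\gamma_{\tilde X}(t_0)$, $t_0\le T$, would have its initial direction realised by some $\gamma_{\tilde X'}$, hence coincide with $\gamma_{\tilde X'}$, and would therefore meet $\gamma_{\tilde X}$ at a parameter in $(0,T)$ — impossible. An invariance-of-domain argument then gives $\{\gamma_{\tilde X}(t):t\in[0,T]\}=S^{2n+1}$, since the injective continuous image of the open $(2n+1)$-dimensional set of pairs $(\tilde X,t)$ with $t\in(0,T)$ is open and can omit only $z$ and $\zeta z$, which are attained at $t=0$ and $t=T$.

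To finish I would invoke homogeneity. Since $F$ is $U(n+1)$-invariant and $U(n+1)$ acts transitively on $S^{2n+1}$, conjugating by $U$ with $Uz=x$ carries the whole picture over $z$ to the corresponding picture over $x$ (conjugates of $X$ going to conjugates of $X$), so $d(x,\exp(tX)x)=d(z,\exp(tU^{-1}XU)z)=Lt$ for all $x$ and all $t\in[0,T]$; thus every $\phi^X_t$ with $t\in[0,T]$ is a Clifford--Wolf translation. Given two points $p,q\in S^{2n+1}$, the covering statement of the previous paragraph, applied with $p$ as base point, yields a conjugate $\tilde X$ of $X$ and a parameter $t\in[0,T]$ with $\exp(t\tilde X)p=q$, so $\phi^{\tilde X}_t$ is a Clifford--Wolf translation sending $p$ to $q$. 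Hence the metric is Clifford--Wolf homogeneous.
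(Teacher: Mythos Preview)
Your overall architecture matches the paper's: find the common reconvergence point $\zeta z$ at time $T$, show the flow geodesics do not cross at \emph{different} parameter values on $(0,T)$, deduce they are all minimal, and conclude CW-homogeneity. Two points deserve comment.

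First, the crux you flag---ruling out $\exp(t_1\tilde X_1)z=\exp(t_2\tilde X_2)z$ with $t_1\neq t_2$---is not handled the same way. The paper does \emph{not} run a geodesic-sphere comparison; it proves an eigenvalue estimate (Lemma~\ref{eva-eigenvalue}): if $P,Q\in U(n+1)$ have eigenvalues $e^{ia_j}$ and $e^{ib_j}$ with $b_j\in[m_1,m_2]$, then the eigenvalues of $PQ$ lie in $\bigcup_j e^{i[a_j+m_1,\,a_j+m_2]}$. Applied to $P=\exp(t_1\tilde X_1)$, $Q=\exp(-t_2\tilde X_2)$ with the normalized $X=\sqrt{-1}(xI+\mathrm{diag}(-I_l,I_m))$, $|x|<1$, this forces every eigenvalue of $PQ$ into $(-2\pi,0)\cup(0,2\pi)$, so $PQ$ cannot fix $z$. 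For the case $t_1=t_2$ the paper uses a separate commutator lemma to show the two curves actually coincide; your claim that distinct conjugates give \emph{disjoint} curves is stronger than what is true and stronger than what you need---you only need that intersections occur at equal parameters.

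Second, your own description of the comparison step is, as written, circular: you appeal to the ``$F$-geodesic sphere of radius $Lt_1$'' before knowing the flow curves are $F$-minimizing. The $S^3$ argument you are invoking uses the \emph{round} metric, not $F$. If you make that substitution your idea does go through, and rather cleanly: writing $\tilde X_j=\sqrt{-1}xI+\tilde X_{0,j}$ with $\tilde X_{0,j}$ conjugate to $\sqrt{-1}\,\mathrm{diag}(-I_l,I_m)$, one has $\mathrm{Re}\langle z,\exp(t\tilde X_{0,j})z\rangle=\cos t$ for every $z$, so $d_{\mathrm{round}}(z,\exp(t\tilde X_{0,j})z)=t$; then
\[
d_{\mathrm{round}}\bigl(z,\exp(t_1\tilde X_{0,1})z\bigr)=t_1,\qquad
d_{\mathrm{round}}\bigl(z,e^{-i(t_1-t_2)x}\exp(t_2\tilde X_{0,2})z\bigr)\le t_2+(t_1-t_2)|x|<t_1,
\]
contradicting $\exp(t_1\tilde X_{0,1})z=e^{-i(t_1-t_2)x}\exp(t_2\tilde X_{0,2})z$. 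This is a legitimate alternative to the paper's eigenvalue lemma, and arguably more transparent; but you should state it with the round metric, not $F$.
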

\begin{proof}
 Up to a  scalar multiple, we can write $X$ as
$X=\sqrt{-1}(x\mbox{I}+\mbox{diag}(-I_l,I_m))$, with $0<|x|<1$. With respect to
the homogeneous Randers metrics constructed above, any $X'$ in the
$\mbox{Ad}_{U(2m)}$-orbit of $X$ generates a flow of isometries
$\phi_t(v)=\exp (tX')v$, $v\in S^{2n+1}\in \mathbf{C}^{2n+2}$ on
the sphere. Up to a scalar constant, $t$ parameterizes the length of
geodesic flow curves. For any $X'$ in the same orbit, the flow
curves $\phi_t(v)$ gives all the geodesic starting at $v$ with
$t=0$, and all reach $-\mbox{exp}(x\pi\sqrt{-1})v$ when $t=\pi$.
For any other $v'$ on the sphere, the shortest geodesic from $v$ to
$v'$ must reach $v'$ when $t\leq \pi$. Otherwise we can change it by choosing
another geodesic for the segment $t\in [0,\pi]$, then the path is not geodesic
but gives the same shortest distance from $v$ to $v'$. This is a contradiction.

The study on those geodesics with $t\in (0,\pi)$
needs the following lemma about the
estimate of the eigenvalues of the product of two unitary matrices, whose  proof will be provided
in the appendix.
\begin{lemma}\label{eva-eigenvalue}
Suppose  $P$ and $Q$ are two unitary matrix in $U(n)$, and denote the eigenvalues of
$P$  as $e^{a_i\sqrt{-1}}$, $a_i\in (-\pi,\pi)$,  the
eigenvalues of $Q$  as $e^{b_i\sqrt{-1}}$, $b_i\in
(-\pi,\pi)$, $i=1,2,\ldots,n$. Let $m_1$ and $m_2$ be the maximum
and minimum of the $b_i$'s, respectively. Then the eigenvalues of $PQ$  must be of the form
 $e^{c_i\sqrt{-1}}$, with $c_i\in [a_i+m,a_i+M]$.
\end{lemma}

If any two of those geodesics intersect within $t\in (0,\pi)$, i.e.,
there are $X_1$ and $X_2$ which are unitary conjugate to $X$, and
$t_1$ and $t_2$ in $(0,\pi)$, such that $\exp (t_1
X_1)v=\exp (t_2 X_2)v$, then $\exp(t_1X_1)\cdot
\exp(-t_2 X_2)$ has an eigenvalue $1$. If $t_1\neq t_2$, then we
may assume $t_1>t_2$. The eigenvalues of $\mbox{exp}(-t_2
X_2)$ have the form $e^{b_i\sqrt{-1}}$, $b_i\in [-t_2 x - t_2, -t_2
x + t_2]$. By the lemma, the eigenvalues of $\exp (t_1
X_1)v=\exp (t_2 X_2)v$ have the form $e^{c_i \sqrt{-1}}$,
$c_i\in [t_1 x + t_1 - t_2 x - t_2, t_1 x + t_1 - t_2 x + t_2]$ or
$c_i\in [t_1 x - t_1 - t_2 x - t_2, t_1 x - t_1 - t_2 x + t_2]$. In
both cases $c_i\in (-2\pi,0)\cup(0,2\pi)$, and $1$ can not be an
eigenvalue of the product. So if any two geodesics starting from
$v$ with $t=0$ intersect in the midway, then they have the same length
between the two common points. Therefore it suffices to
prove that all the geodesics from $v$ to $-\mbox{exp}(x\pi\sqrt{-1})v$
are the shortest pathes. In fact, when $t=t_1=t_2\in (0,\pi)$,
$\exp(-tX_1)\exp(tX_2)v=v$, we can prove that these two
geodesic coincides for all $t\in[0,\pi]$. The essential steps are
left in the Appendix. The flows generated by the Killing vector
fields in the orbit of $X$ are CW-translations for $t\in (0,\pi]$.
This completes the proof of the theorem.
\end{proof}

\section{Randers metrics on $S^{4n+3}$ with $I_0(S^{4n+3},F)\subset Sp(n+1)U(1)$ }

Now we start the discussion on the cases (4) and (5) in the list of
homogeneous spaces for spheres. Let $F$ be a non-Riemannian
homogeneous Randers metric on $S^{4n+3}$, $n>0$, such that its connected isometry group
contains $Sp(n+1)$. Then $I_0(M,F)$ must be among $U(2n+2)$, $Sp(n+1)U(1)$ or
$Sp(n+1)$. The unitary case has already been discussed. So we only need to consider
the case $I_0(M,F)=Sp(n+1)U(1)$ or $I_0(M,F)=Sp(n+1)$.

For $G=Sp(n+1)$ or $Sp(n+1)U(1)$,
$\mathfrak{m}$ can be decomposed as $\mathfrak{m}_0\oplus\mathfrak{m}_1$,
where $\mathfrak{m}_0=\mbox{Im}\,\mathbb{H}$ is the $3$-dimensional trivial representation of $Sp(n-1)$,
and $\mathfrak{m}_1=\mathbb{H}^{n}$ with the action of $Sp(n)$ by
left multiplication.
When regarded as a $Sp(n)U(1)$ representation, $\mathfrak{m}_1$ also has the action of
$U(1)$-scalar multiplication from the right, and $\mathfrak{m}_0$ is further decomposed
into the sum of the $1$-dimensional trivial representation of $U(1)$, generated by $\mathbf{i}
\in\mathbb{H}$ (which is identified with $\sqrt{-1}\in U(1)$), and the 2-dimensional space spanned
by $\mathbf{j}$ and $\mathbf{k}$, on which $U(1)$ acts as the rotation group.
The projection from the Lie algebra of
$G=Sp(n+1)$ or $Sp(n+1)U(1)$
to $\mathfrak{m}$ is just the differentiation of the group action
on $(0,\ldots,0,1)^*\in \mathfrak{m}\subset\mathbb{H}^{n+1}$ at $I$.

We have a standard inner product on $\mathfrak{m}$ induced by a symmetric
Riemannian metric on the sphere. It will be denoted as $\langle \,,\,\rangle_{eq}$.
Any $\mbox{Ad}_{Sp(n)Sp(1)}$-invariant linear metric on $\mathfrak{m}$ can be written
as $\alpha^2(u,q)=\mbox{Re}(a q^* q + b u^* u)$, $q\in \mbox{Im}\,\mathbb{H}$, $u\in\mathbb{H}^n$.
The standard inner product $\langle\,,\,\rangle_{eq}$ is corresponding to the case $a=b=1$.
A non-Riemannian Randers metric $F$ can written as
\begin{equation}\label{alpha+beta}
F=\alpha+\langle\cdot,V\rangle_{eq},
\end{equation}
where $V\in \mathfrak{m}_0$ can be any non-zero vector if the isometry
group is $Sp(n+1)$, or generated by $\mathbf{i}$ if the isometry group
is $Sp(n+1)U(1)$.
We now prove
\begin{proposition}\label{nonzero}
If $X\in \mathfrak{sp}(n+1)$ generates  a Killing vector field of constant length with respect to a
non-Riemannian homogeneous Randers metric $F$ on $S^{4n+3}$, then
$X=0$.
\end{proposition}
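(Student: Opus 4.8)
The plan is to combine the orbit criterion for constant-length Killing fields (the Lemma of Section~3) with the quaternionic structure of $\mathfrak{m}_0$. Recall that the base point is $\xi_0=(0,\dots,0,1)^{\ast}\in\mathbb{H}^{n+1}$, that $Sp(n+1)$ alone already acts transitively, and that the projection to $\mathfrak{m}$ of $Y\in\mathfrak{sp}(n+1)$ is $Y\xi_0$, i.e. the last column of $Y$: its first $n$ quaternionic coordinates give the $\mathfrak{m}_1$-component and its last coordinate (which lies in $\mathrm{Im}\,\mathbb{H}$) gives the $\mathfrak{m}_0$-component. So if $X$ generates a Killing field of constant length $L$, the Lemma says $F$ is identically $L$ on the projection to $\mathfrak{m}$ of the orbit $\{gXg^{\ast}:g\in Sp(n+1)\}$; it is this set of projections that I want to make large enough to force $V=0$.

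Assume $X\neq 0$ and argue for a contradiction. First I would diagonalize: every element of $\mathfrak{sp}(n+1)$ is $Sp(n+1)$-conjugate to $\mathrm{diag}(\mathbf{i}\mu_1,\dots,\mathbf{i}\mu_{n+1})$ with real $\mu_j$, so after replacing $X$ by a conjugate (which changes neither the orbit nor the conclusion) we may take $X$ of this form, and since $X\neq 0$ some $\mu_j\neq 0$; conjugating by a permutation matrix of $Sp(n+1)$ (and, if needed, by $\mathrm{diag}(I_n,\mathbf{j})$ to flip a sign) we may assume $\mu:=\mu_{n+1}>0$. Then I would conjugate further by $g_s=\mathrm{diag}(I_n,s)$ for an arbitrary unit quaternion $s$ — this lies in $Sp(n+1)$ because $s\bar s=1$ — which gives $g_sXg_s^{\ast}=\mathrm{diag}(\mathbf{i}\mu_1,\dots,\mathbf{i}\mu_n,\mu\,s\mathbf{i}\bar s)$. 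Its last column is $(0,\dots,0,\mu\,s\mathbf{i}\bar s)^{\ast}$, so the corresponding point of $\mathfrak{m}$ has zero $\mathfrak{m}_1$-component and $\mathfrak{m}_0$-component $\mu\,s\mathbf{i}\bar s$. As $s$ runs over the unit quaternions, $s\mathbf{i}\bar s$ runs over the whole unit sphere of $\mathrm{Im}\,\mathbb{H}$, so every $q\in\mathrm{Im}\,\mathbb{H}$ with $|q|=\mu$ arises in this way, and the criterion yields $F(0,q)=L$ for all such $q$.

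To conclude I would use the explicit form of $F$ on $\mathfrak{m}_0$. Since $V\in\mathfrak{m}_0$ and $\langle\,,\,\rangle_{eq}$ restricts on $\mathrm{Im}\,\mathbb{H}$ to the ordinary Euclidean inner product, $F(0,q)=\alpha(0,q)+\langle(0,q),V\rangle_{eq}=\sqrt{a}\,|q|+\langle q,V\rangle$ for $q\in\mathrm{Im}\,\mathbb{H}$. Hence $F(0,q)\equiv L$ on the $2$-sphere $|q|=\mu$ forces $\langle q,V\rangle$ to be constant there; comparing the values at $q$ and at $-q$ shows this constant is $0$ and that $\langle q,V\rangle=0$ for every $q$ with $|q|=\mu$, whence $V=0$. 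This contradicts the non-Riemannian hypothesis on $F$, so $X=0$.

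I expect the only genuinely substantive inputs to be the two standard facts just invoked — that a quaternionic anti-Hermitian matrix is $Sp(n+1)$-diagonalizable with purely imaginary spectrum, and that conjugation by unit quaternions acts transitively on each sphere of $\mathrm{Im}\,\mathbb{H}$ — after which the proof collapses to the triviality that a nonzero linear functional cannot be constant on a sphere of dimension $\geq 1$. It is exactly here that the three-dimensionality of $\mathfrak{m}_0=\mathrm{Im}\,\mathbb{H}$ is used, in contrast to the unitary case of Section~\ref{u-isometry}, where $\mathfrak{m}_0$ is one-dimensional, the analogous "sphere" is just two points, and no condition is imposed.
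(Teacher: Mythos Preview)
Your proof is correct and follows essentially the same route as the paper: diagonalize $X$, bring a nonzero diagonal entry to the last slot, and then conjugate in that slot by unit quaternions to obtain antipodal projections $\pm q$ in $\mathfrak{m}_0$ with the same $F$-value, which forces $\beta(q)=0$ and hence $V=0$. The only cosmetic difference is that you sweep out the whole $2$-sphere $\{|q|=\mu\}$ while the paper more economically picks just the two conjugates aligned with $\pm V$; in both arguments the decisive step is the same antipodal comparison $\alpha(q)+\beta(q)=\alpha(-q)+\beta(-q)$.
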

\begin{proof}
Up to a $Sp(n+1)$ conjugation, one can assume that  $X$  is a
diagonal matrix in $\mathfrak{gl}(n+1,\mathbb{H})$. If a diagonal entry of $X$ is not $0$, say $q\in {\mathbb H}$, then there is an element $\sigma$ in the
Weyl group such that the
 down right corner of $\sigma (X)$ is $q$.  Moreover, up to a $Sp(n+1)$ conjugation,  we can further assume that the down right corner of $\sigma (X)$
 is real proportional to $V$ in (\ref{alpha+beta}). There are two choices for this down right corner, namely $\frac{|q|}{|V|}V $ and $-\frac{|q|}{|V|} V$.
The projections of the corresponding matrix to $\mathfrak{m}$ form an opposite pair
of vectors, denoted by $q_1$ and $q_2$. Then we have $\alpha (q_1)=\alpha (q_2)$, $F(q_1)=F(q_2)$ and $\beta (q_1)=-\beta(q_2)$. This means
that $\beta (q_1)=\beta (q_2)=0$. Therefore we have  $\langle V,V\rangle_{eq}=0$. This is
a contradiction with $V\neq 0$.
\end{proof}

From now on, we assume that $G=I_0(S^{4n+3},F)=Sp(n+1)U(1)$.
The vector $V$ in (\ref{alpha+beta}) will be denoted as
$(0,\ldots,0,c\mathbf{i})^T$, where $c$ is a nonzero real number.
The metric $\alpha$ induced on $\mathfrak{m}$ can be written
as $\alpha^2=\mbox{Re}(a_1\lambda_1^2+a_2\lambda_2^2+a_2\lambda_3^2+b u^* u)$,
$q=\lambda_1\mathbf{i}+\lambda_2\mathbf{j}+\lambda_3\mathbf{k}$, $u\in\mathbb{H}^n$,
for some positive parameter $a_1$, $a_2$ and $b$. Moreover,  we  have $a_2\neq b$, since otherwise
$I_0(S^{4n+3},F)$ will be $U(2n+2)$.

As in the last section, the center of $G$ generates CW-translations for any
homogeneous Finsler metric on the sphere, and they are not the ones we are searching for.
Assume there is a non-central vector in $\mathfrak{g}$, which
generates a Killing vector field of constant length with respect to a non-Riemannian homogeneous Randers metric
$F$. Then it can be written as
 $(X,x)\in \mathfrak{sp}(n+1)\oplus\mathbb{R}$, with $X\neq 0$.
Then  Proposition \ref{nonzero} asserts that $x$ is nonzero either.
%By choosing a suitable $Sp(1)$ conjugation,
%we can assume $V=\left(
%                   \begin{array}{cc}
%                     0 & 0 \\
%                     0 & c\mathbf{i} \\
%                   \end{array}
%\right)$, where $\mathbf{i}$ can be identified with
%$\left(
%  \begin{array}{cc}
%    -\sqrt{-1} & 0 \\
%    0 & \sqrt{-1} \\
%  \end{array}
%\right)\in su(2)$ and $c>0$.
%We see $Lie(Sp(n)U(1))$ can be identified with the subalgebra of all the pairs
%$(\left(
%   \begin{array}{cc}
%     A & 0 \\
%     0 & \lambda\mathbf{i} \\
%   \end{array}
% \right)
%,\lambda)$, in which $A\in sp(n)$ and $\lambda\in\mathbf{R}$.

Up to a suitable $Sp(n+1)$ conjugation, we can assume
\begin{equation}
X=\mbox{diag}(x_1\mathbf{i},\ldots,x_{n+1}\mathbf{i}),
x_i\in\mathbf{R},  i=1,\ldots,n+1.
\end{equation}
Using the action of the Weyl group,
we can reorder the $x_i$'s freely, and change $x_{n+1}$ to
$-x_{n+1}$. In this way, we get a set of elements in $\mathfrak{g}$. Their  projections to $\mathfrak{m}$ have the form $(0,\ldots,0,(\pm x_i+x)
\mathbf{i})^T$, $i=1,\ldots,n+1$. They all have the same $F$ values.
Similar discussions shows that $\{\pm x_i+x, i=1,\ldots,n+1\}$
must take exactly $2$ values with opposite signs. So the $|x_i|$'s must
be equal to each other. Using actions of the Weyl group, we can change all the $x_i$'s
to the same positive number.
Then we can write $X=x'\mathbf{i}I\in sp(n+1)\subset gl(n+1,\mathbb{H})$, with $x'>|x|>0$.

Now we need to calculate the projection of the $\mbox{Ad}_{Sp(n+1)U(1)}$-orbit, which is also
the $\mbox{Ad}_{Sp(n+1)}$-orbit of $(X,x)$. Suppose
$Q=Q_1+Q_2\mathbf{j}\in Sp(n+1)$, where $Q_1$ and $Q_2$ are complex matrices, and $\sqrt{-1}$
is identified with $\mathbf{i}$.
The condition $Q\in Sp(n+1)$ implies
\begin{eqnarray}
Q_1^* Q_2 - Q_2^T \bar{Q_1}&=&0,\\
Q_1^* Q_1 + Q_2^T \bar{Q_2} &=& I.
\end{eqnarray}
Then $Q^* X Q = -\mathbf{i}I + 2 Q_1^* Q_1 \mathbf{i} + 2 Q_1^* Q_2 \mathbf{k}
= \mathbf{i}(-I+2Q_1^*(Q_1+Q_2\mathbf{j}))$.
As we will project it to $\mathfrak{m}$, we only need to see its last column.
%The last row of $Q=Q_1+Q_2\mathbf{j}$ can be any unit vector, so we expect to make

The last row (?) of $Q$ can be denoted as $(\sqrt{1-|q|^2}w,q)$,
%ÇëÈ·ÈÏÎÞÎó
where $w^*\in\mathbb{H}^{n-1}$
is a unit vector, and $q=q_1+q_2\mathbf{j}$, $q_i\in\mathbb{C}$.
We first assume that the last row of $Q_1^*$ is $(0,\ldots,0,\bar{q_1})$. Then the projection
of $(Q^* X Q,x)$ in $\mathfrak{m}$ is
\begin{equation}
-((2x'(1-|q_1|^2-|q_2|^2))\mathbf{i}q_1 w,(x'(2|q_1|^2-1)+x)\mathbf{i}+
2x'\bar{q_1}q_2 \mathbf{k})^*.
\end{equation}
It gives all the points on the sphere $S$ for $\langle \,,\,\rangle_{eq}$, which is centered at $x$
and has a radius $x'$ if we can find the suitable $w$ and $q$. Note that this is the same sphere appearing in the last section with $m=l$.
 With $Sp(n)$ changed by
$SU(2n)$ or $U(2n)$, the $\mbox{Ad}_{U(2n)}$-orbit ( containing the $\mbox{Ad}_{Sp(n)}$-orbit )
will be mapped onto $S$. If the $\mbox{Ad}_{Sp(n)}$-orbit is also mapped onto
$S$, then the Randers metric must be the one
constructed in the last section, and in this case we have  $I(S^{4n+3},F)=U(2n+2)$.

To see the $\mbox{Ad}_{Sp(n)}$-orbit is also mapped onto $S$, we  need the following lemma.
\begin{lemma}
For any unit vector in $\mathbb{H}^n$, there is $Q\in Sp(n)$, such that the last row of
$Q$ is the given vector, and the last row of $Q^*$ has the form $(q',0\ldots,0,q)$, where the imaginary part of $q'$
is contained in the real span of   $\mathbf{j}$ and $\mathbf{k}$.
\end{lemma}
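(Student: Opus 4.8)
The plan is to build the required matrix in two stages: first complete the given vector to \emph{some} element of $Sp(n)$ in the crudest possible way, and then exploit the residual freedom in that completion to force the last column into the prescribed sparse shape. I shall only use that $Sp(n)$ consists of the quaternionic matrices with $Q^{*}Q=QQ^{*}=I$ — so that every row and every column of such a $Q$ has norm $1$ — and that the standard positive-definite quaternionic Hermitian form on $\mathbb{H}^{n}$ admits a Gram--Schmidt procedure. Write $\mathbf{v}=(v_1,\dots,v_n)$ for the given unit (row) vector and set $r=\sqrt{1-|v_n|^2}\ge 0$; the case $n=1$ being vacuous, assume $n\ge 2$.

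First, extend $\mathbf{v}$ to a quaternionic orthonormal basis of $\mathbb{H}^n$ by Gram--Schmidt and let $Q_0\in Sp(n)$ be the matrix having these vectors as its rows, with $\mathbf{v}$ in the bottom row. Its $(n,n)$ entry is then $v_n$, so its last column is $(d^{T},v_n)^{T}$ for some $d\in\mathbb{H}^{n-1}$, and $|d|=r$ because that column has norm $1$. Next, for any $P_0\in Sp(n-1)$ the block matrix $\mathrm{diag}(P_0,1)$ lies in $Sp(n)$, and passing from $Q_0$ to $\mathrm{diag}(P_0,1)\,Q_0$ leaves the bottom row equal to $\mathbf{v}$ (the $(n,n)$ entry being $1$) while changing the last column to $((P_0 d)^{T},v_n)^{T}$; hence $d$ may be replaced by any vector in its $Sp(n-1)$-orbit. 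Finally, invoke the classical fact that $Sp(n-1)$ acts transitively on each origin-centred sphere of $\mathbb{H}^{n-1}\cong\mathbb{R}^{4(n-1)}$: choosing $P_0$ with $P_0 d=(r,0,\dots,0)^{T}$, the matrix $Q:=\mathrm{diag}(P_0,1)\,Q_0\in Sp(n)$ has bottom row $\mathbf{v}$ and last column $(r,0,\dots,0,v_n)^{T}$. Hence the last row of $Q^{*}$ is $(r,0,\dots,0,\overline{v_n})$, which is of the required form with $q'=r$ and $q=\overline{v_n}$; since $q'$ is real, its imaginary part is $0$ and in particular lies in the real span of $\mathbf{j}$ and $\mathbf{k}$.

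I do not anticipate a genuine obstacle; the delicate points are purely the bookkeeping ones of quaternionic linear algebra, namely keeping the left/right module conventions straight through the Gram--Schmidt step, confirming that left multiplication by $\mathrm{diag}(P_0,1)$ fixes the bottom row and moves only the top block of the last column, and quoting the transitivity of $Sp(n-1)$ on spheres (equivalently, that every unit vector of $\mathbb{H}^{n-1}$ is carried to the first standard basis vector by some element of $Sp(n-1)$). If the application in fact requires $q'$ to lie in the pure plane $\mathbb{R}\mathbf{j}\oplus\mathbb{R}\mathbf{k}$ rather than merely to have $\mathbf{i}$-free imaginary part, one extra left multiplication by $\mathrm{diag}(\mathbf{j},1,\dots,1)\in Sp(n)$ does the job, replacing the surviving entry $r$ by $r\mathbf{j}$ while again fixing the bottom row; so the exact amount of imaginary freedom stated in the lemma is immaterial to the argument.
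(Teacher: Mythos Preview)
Your argument is correct. Both your proof and the paper's hinge on the same underlying freedom---the ambiguity in completing $\mathbf{v}$ to an orthonormal quaternionic basis---but you organize it differently. The paper works directly: it places $\mathbf{v}$ in the last row, then chooses the middle $n-2$ rows inside $\mathbf{v}^{\perp}\cap e_n^{\perp}$ (so their last entries vanish automatically), and finally adjusts the one remaining row by a unit scalar so that its last entry has no $\mathbf{i}$-component. You instead take an \emph{arbitrary} completion $Q_0$ and then correct it in one stroke by the left action of $\mathrm{diag}(Sp(n-1),1)$, invoking the transitivity of $Sp(n-1)$ on spheres in $\mathbb{H}^{n-1}$. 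Your packaging is cleaner---the dimension count in the paper (that $\mathbf{v}^{\perp}\cap e_n^{\perp}$ has quaternionic dimension $\ge n-2$) is replaced by a standard transitivity fact---and it actually yields the marginally stronger conclusion that $q'$ can be taken \emph{real}, not merely $\mathbf{i}$-free in its imaginary part. Either form is more than enough for the application.
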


The proof can be given by the process
of choosing an orthogonal basis for the inner product $\langle x,y\rangle=x^* y\in\mathbb{H}$. First
we  choose an arbitrary unit vector for the last row of $Q$, then choose the next $n-2$ in
orthogonal complement of the first one, at the same time perpendicular to $(0,\ldots,0,1)$.
For the last basis vector, we can use a suitable unit scalar multiplication to make its
last term only contain $\mathbf{j}$ and $\mathbf{k}$. (?)
%´Ë´¦±íÊöÒ²²»ÊǺÜÇå³þ¡£

We conclude this paper with the following theorem.
\begin{theorem}
Let $F$ be a homogeneous Randers metric on $S^{4n+3}$ with  $I_0(S^{4n+3},F)=Sp(n+1)U(1)$. Then  any Killing vector
field of constant length for $F$ is generated by a central vector in the Lie algebra of the
isometry group.
\end{theorem}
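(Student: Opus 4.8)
The plan is to combine Proposition~\ref{nonzero} with an analysis of the case that remains, namely a non-central Killing vector field of constant length coming from $(X,x)\in\mathfrak{sp}(n+1)\oplus\mathbb{R}$ with $X\neq0$ and (by Proposition~\ref{nonzero}) $x\neq0$. First I would carry out the normalization already sketched above: using a $Sp(n+1)$-conjugation and the Weyl group action, reduce $X$ to the form $X=x'\mathbf{i}I$ with $x'>|x|>0$, by observing that the projections to $\mathfrak{m}$ of the various Weyl-conjugates of $(X,x)$ are the vectors $(0,\dots,0,(\pm x_i+x)\mathbf{i})^{T}$, which must all give the same value of $F$; this forces $\{\pm x_i+x\}$ to consist of exactly two values of opposite sign, hence all $|x_i|$ equal. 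This is the routine part.

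The heart of the argument is then to compute the projection to $\mathfrak{m}$ of the full $\mathrm{Ad}_{Sp(n+1)U(1)}$-orbit of $(X,x)$ — equivalently, since the $U(1)$ factor is central and acts trivially here, the $\mathrm{Ad}_{Sp(n+1)}$-orbit — and to show that this projection \emph{does} fill out the whole sphere $S$ for $\langle\,,\,\rangle_{eq}$ centered at $x\mathbf{i}$ with radius $x'$. The key computation is that for $Q=Q_1+Q_2\mathbf{j}\in Sp(n+1)$ with last row $(\sqrt{1-|q|^2}\,w,q)$, $q=q_1+q_2\mathbf{j}$, the projection of $(Q^*XQ,x)$ to $\mathfrak{m}$ has the displayed form with $\mathbf{i}$-component $(x'(2|q_1|^2-1)+x)$ and the "imaginary-span-of-$\mathbf{j},\mathbf{k}$" component proportional to $\bar q_1 q_2$. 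Using the last lemma of the excerpt, one may choose $Q\in Sp(n)$ realizing any prescribed unit vector in the first $n$ slots while arranging the last row of $Q^*$ to be of the form $(q',0,\dots,0,q)$ with $\mathrm{Im}\,q'\in\mathrm{span}_{\mathbb R}\{\mathbf{j},\mathbf{k}\}$; combined with the freedom in $q_1,q_2,w$, this shows every point of $S$ is attained. Hence the projected orbit coincides with the one from Section~\ref{s2n+1-section} with $m=l$, so the indicatrix is a round sphere for $\langle\,,\,\rangle_{eq}$ and $a_2=b$ (via the $a=b+c^2$-type relation of Theorem~\ref{10} specialized to $l=m$).

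From $a_2=b$ I would then derive the contradiction: we assumed throughout (in order that $I_0(S^{4n+3},F)=Sp(n+1)U(1)$ rather than $U(2n+2)$) precisely that $a_2\neq b$, as recorded just after the decomposition of $\alpha$. Thus no non-central $(X,x)$ can generate a Killing field of constant length, and together with Proposition~\ref{nonzero} (which kills the case $x=0$, $X\neq0$) and the trivial observation that the remaining possibility $X=0$, $x\neq0$ is a central vector, this proves that every Killing vector field of constant length for $F$ is generated by a central element of $\mathrm{Lie}(Sp(n+1)U(1))=\mathfrak{sp}(n+1)\oplus\mathbb{R}$, as claimed.

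I expect the main obstacle to be the orbit computation: verifying cleanly that the $\mathrm{Ad}_{Sp(n)}$-orbit — and not merely the larger $\mathrm{Ad}_{U(2n)}$-orbit — already surjects onto $S$, which is exactly the point where one must use the quaternionic structure carefully and invoke the last lemma about adjusting the last row of $Q^*$. Once that surjectivity is in hand, the identification with the Section~\ref{s2n+1-section} construction and the resulting forced equality $a_2=b$ contradicting the standing hypothesis $a_2\neq b$ is immediate.
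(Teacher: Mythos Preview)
Your proposal is correct and follows essentially the same route as the paper: reduce via Proposition~\ref{nonzero} and the Weyl group argument to $X=x'\mathbf{i}I$, compute the $\mathrm{Ad}_{Sp(n+1)}$-orbit projection, use the lemma on choosing $Q\in Sp(n)$ to show it fills the round sphere $S$, and conclude that the Randers metric coincides with the Section~\ref{s2n+1-section} construction, forcing the isometry group to be $U(2n+2)$ (equivalently $a_2=b$), a contradiction. The only cosmetic difference is that the paper phrases the final contradiction as $I_0(S^{4n+3},F)=U(2n+2)$ rather than $a_2=b$, but as noted in the text these are equivalent.
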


\section{Appendix: estimates of the eigenvalues of unitary matrices}
%¸½Â¼ÎÒÖ»¸ÄÁËһЩ´òÓ¡´íÎó£¬Çë×ÐϸУ¶Ô¡£
In this section we give a proof of Lemma \ref{eva-eigenvalue}.

Up to a suitable unitary conjugation, we can assume
$Q=\exp B$, where
$B=\mbox{diag}(b_1\sqrt{-1},\ldots,b_n\sqrt{-1})$, $b_i\in
(-\pi,\pi)$ for $i=1,\ldots,n$.

First we consider a special case such that $P \exp (tB)$ has
no multiple eigenvalues for each $t\in [0,1]$. The eigenvalues
of $P \mbox{exp}(tB)$ can then be presented as smooth functions
$\lambda_1(t), \ldots,\lambda_n(t)$ of $t\in [0,1]$. We can also
find the corresponding unit eigenvectors $v_1(t),\ldots,v_n(t)$, respectively, which
are vector-valued smooth  function of $t$.  Differentiating the equation
$P\exp (tB)v_1(t)=\lambda_1(t) v_1(t)$ with respect to $t$ and
taking $t=0$,  we get
\begin{equation}
Pv'_1(0)+Bv_1(0)=\lambda'_1(0) v_1(0) + \lambda_1(0)
v'_1(0).\label{3}
\end{equation}
Taking the inner product with $v_1(0)$ for both sides of (\ref{3}),
and noticing that $Pv'_1(0)$ and $v'_1(0)$ are orthogonal to $v_1(0)$,
we have $Bv_1(0)=\lambda'_1(0)$. So $\lambda'_1(0)/\sqrt{-1}$ is
bounded between the minimum and the maximum of all the $b_i$'s.

This calculation is valid for all $i=1,\ldots,n$ and all $t\in
[0,1]$ with $P$ replaced by $P\exp (tB)$. If we write
$\lambda_i(t)=\mbox{exp}(c_i(t)\sqrt{-1})$ with smooth $c_i(t)$
satisfying $c_i(0)=a_i$, then $c'_i(t)=\lambda'_i(t)/\sqrt{-1}$ is
bounded between the minimum $m_1$ and maximum $m_2$ for all the
$b_i$'s.

When the matrix changes continuously, the eigenvalues also vary continuously.
To prove the lemma for general $P$ and $Q$, we only need to notice the fact that
generically the unitary matrices have no multiple eigenvalues. In
fact, the set of those unitary matrices with multiple eigenvalues form a
real subvariety of codimension $3$. Therefore for generically chosen $P$ and
$O$, then $1$-parameter curve $P\exp (tB)$ has no intersection
with it. If they intersect at finite points, it does not matter
either. Though the eigenvalue functions are not globally smoothly
defined, in each small closed interval, they can be continuous
defined, and in each small open interval, they are smooth. The
argument can still be carried out for each interval, and one can get the
estimate of the eigenvalues for $t=1$.

The following lemma is the essential technique to prove the claim in
section \ref{s2n+1-section} that the Clifford Wolf homogeneous
Randers metrics we constructed on $S^{2n+1}$ satisfying the property
that the geodesics starting from one point will all pass another
point, and they do not intersect each other in the midway.

\begin{lemma}
Let $U\in U(m+l)$ and $X=\sqrt{-1}\mbox{diag}(-I_l,I_m)$. If
the commutator $\exp (tX)U\exp (-tX)U^*$ has an eigenvalue
$1$ for some $t\in (0,\pi)$, then for each $t\in (0,\pi)$ it has an
eigenvalue $1$ with the same eigenvector.
\end{lemma}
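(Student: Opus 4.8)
The plan is to analyze the eigenvalue-$1$ condition for the commutator $C(t) = \exp(tX)U\exp(-tX)U^*$ directly, by reducing it to a statement about a fixed vector of the action of $\exp(tX)$ and $\exp(-tX)$ conjugated by $U$. First I would rewrite the hypothesis: $C(t)$ has eigenvalue $1$ with eigenvector $v$ if and only if $\exp(tX)U\exp(-tX)U^* v = v$, i.e. setting $w = U^*v$ this reads $\exp(tX)Uw = U\exp(tX)w$, equivalently $\exp(tX)$ commutes with $U$ on the line $\mathbb{C}w$ in the sense that $U^{-1}\exp(-tX)U\exp(tX)$ fixes $w$. Since $X = \sqrt{-1}\,\mathrm{diag}(-I_l, I_m)$, the operator $\exp(tX)$ acts as the scalar $e^{-\sqrt{-1}t}$ on $\mathbb{C}^l$ and $e^{\sqrt{-1}t}$ on $\mathbb{C}^m$; in particular $\exp(tX) = e^{-\sqrt{-1}t}\bigl(I + (1-e^{2\sqrt{-1}t})P\bigr)$ where $P$ is the orthogonal projection onto the $\mathbb{C}^m$-block (the $+1$-eigenspace of $X/\sqrt{-1}$ shifted). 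So the condition $\exp(tX)Uw = U\exp(tX)w$ becomes, after cancelling the common scalar $e^{-\sqrt{-1}t}$, the relation $(1-e^{2\sqrt{-1}t})\bigl(P U w - U P w\bigr) = 0$, hence for $t \in (0,\pi)$ (where $1 - e^{2\sqrt{-1}t}\neq 0$) it is exactly $PUw = UPw$, i.e. $U$ intertwines the projection $P$ on the vector $w$. Crucially, this last equation no longer involves $t$.

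The key point is thus: for $t\in(0,\pi)$, ``$C(t)$ has eigenvalue $1$ with eigenvector $v$'' is equivalent to the $t$-independent condition $PU(U^*v) = UP(U^*v)$, i.e. $PUU^*v = UPU^*v$, that is $Pv = UPU^*v$. Since this condition does not depend on $t$, it holds for one $t\in(0,\pi)$ if and only if it holds for all $t\in(0,\pi)$, and by the computation above this is equivalent to $C(t)v = v$ for all $t\in(0,\pi)$ — and the eigenvector $v$ is literally the same throughout. This gives the lemma.

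The main obstacle — really the only thing requiring care — is the algebraic manipulation with quaternionic/complex block structure: one must be sure that $\exp(tX)$ has the stated scalar action on the two blocks (this uses that $X/\sqrt{-1} = \mathrm{diag}(-I_l, I_m)$ is a genuine orthogonal projection up to affine normalization, so $\exp(tX)$ is diagonal with only two distinct entries $e^{\mp\sqrt{-1}t}$), and that the scalar factor $1 - e^{2\sqrt{-1}t}$ is nonzero precisely because $2t \in (0,2\pi)$ excludes $2t = 0$ and $2t = 2\pi$. Once the reduction to the $t$-free equation $Pv = UPU^*v$ is in hand, there is nothing left to prove. I would present the argument by: (i) stating the scalar-block form of $\exp(tX)$; (ii) deriving the equivalence of $C(t)v=v$ with $(1-e^{2\sqrt{-1}t})(PUU^*v - UPU^*v)=0$; (iii) observing $1-e^{2\sqrt{-1}t}\neq0$ on $(0,\pi)$ and concluding the bracketed condition is independent of $t$; (iv) reading the lemma off, with the same $v$.
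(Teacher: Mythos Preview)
Your argument is correct and follows essentially the same route as the paper: both reduce $C(t)v=v$ to a $t$-independent linear condition by factoring out the nonvanishing scalar $e^{\pm 2\sqrt{-1}t}-1$ for $t\in(0,\pi)$. The paper carries this out with an explicit block decomposition of $U$ (obtaining $C(t)-I$ as a product whose kernel is $B^{-1}(\ker U_2\oplus\ker U_3)$ with $B$ invertible and $t$-free), whereas you phrase the same computation via the projection $P$ and arrive at the equivalent condition $PUw=UPw$; modulo a harmless sign (your factor should read $e^{2\sqrt{-1}t}-1$), the two arguments are interchangeable.
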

\begin{proof}
We denote $\mbox{exp}(tX)$ as $\left(
                                        \begin{array}{cc}
                                          \lambda & 0 \\
                                          0 & \bar{\lambda} \\
                                        \end{array}
                                      \right)
$, $\lambda\neq 1$ or $-1$, and $U$ as $\left(
                                          \begin{array}{cc}
                                            U_1 & U_2 \\
                                            U_3 & U_4 \\
                                          \end{array}
                                        \right)
$. Then
\begin{equation}
\mbox{exp}(tX)U\mbox{exp}(-tX)U^*=I+\left(
                                      \begin{array}{cc}
                                        (\lambda^2-1)U_2 & 0 \\
                                        0 & (\bar{\lambda^2}-1)U_3 \\
                                      \end{array}
                                    \right)
\left(
  \begin{array}{cc}
    U^*_2 & U^*_4 \\
    U^*_1 & U^*_3 \\
  \end{array}
\right).\nonumber
\end{equation}
It is not hard to see that if $1$ is an eigenvalue, then $U_2$ or $U_3$
must be singular. The eigenspace of $1$ is the direct sum of the
kernel  of $U_2$ and $U_3$, multiplied by an invertible matrix
irrelative to $t$. Obviously the change of $\lambda\in S^1\backslash
\{\pm 1\}$ or $t\in (0,1)$ does not affect the eigenspace for $1$.
\end{proof}

\noindent {\bf Acknowledgement. }\quad We are grateful to Dr. Libing Huang and Dr. Zhiguang Hu for useful discussions. This work was finished during the second author's visit to the Chern institute of Mathematics. He would like to express his deep gratitude to the members of the institute for their hospitality.

\end{document}